\newtheorem{theorem}{Theorem}[section]
\newtheorem{lemma}[theorem]{Lemma}
\newtheorem{proposition}[theorem]{Proposition}
\theoremstyle{definition}
\newtheorem{definition}{Definition}
\newtheorem{remark}{Remark}
\newtheorem{conjecture}{Conjecture}
\begin{document}
	
	\title[$\Phi$ badly approximable matrices have full dimension]{The set of $\Phi$ badly approximable matrices has full Hausdorff dimension}
	
	\author{Johannes Schleischitz}

	\thanks{Middle East Technical University, Northern Cyprus Campus, Kalkanli, G\"uzelyurt \\
		johannes@metu.edu.tr ; jschleischitz@outlook.com}

\begin{abstract}
    Recently Koivusalo, Levesley, Ward and Zhang introduced the set of simultaneously $\Phi$-badly approximable real vectors of $\mathbb{R}^m$ with respect to an approximation function $\Phi$, and
    determined its Hausdorff dimension
    for the special class of power functions $\Phi(t)=t^{-\tau}$. We refine this by naturally extending the formula to arbitrary decreasing functions, in terms of the lower order of $1/\Phi$ at infinity. 
    We also provide an alternative, rather mild condition on $\Phi$ for this conclusion. Moreover, our results apply in the general matrix setting, and we establish an according formula for packing dimension as well. 
    Thereby we also complement a recent refinement by Bandi and de Saxc\'e
    on the smaller set of exact approximation with respect to $\Phi$. Our basic tool 
is (a uniform variant of) the variational principle by Das, Fishman, Simmons, Urba\'nski. We also prove some new lower estimates regarding the set of exact approximation order in the matrix setting, which are sharp in special instances. For this we combine 
the result by Bandi and de Saxc\'e with a method developed by Moshchevitin.
\end{abstract}

\maketitle

{\footnotesize{

		{\em Keywords}: Hausdorff dimension, packing dimension, exact approximation, variational principle \\
		Math Subject Classification 2020: 11H06, 11J13}}

\section{Introduction: Sets of exact and bad approximation}  \label{intro}

Let $m,n$ be positive integers and $\Phi: \mathbb{N}\to (0,\infty)$ be a function
with the property that 
\begin{equation}  \label{eq:2}
    \Phi(t)\le t^{-n/m}, \qquad t\ge 1.
\end{equation}
Note that we do not assume monotonicity.
Define the set of $\Phi$-approximable matrices $W(\Phi)$ to be the set of real $m\times n$ matrices $A$ such that
\[
\Vert A \textbf{q}-\textbf{p}\Vert \leq \Phi(\Vert \textbf{q}\Vert)
\]
for infinitely many integer vectors $\textbf{p}\in \mathbb{Z}^{m}, \textbf{q}\in\mathbb{Z}^{n}$, where for ease of notation
$\Vert.\Vert$ denotes 
maximum norm on both $\mathbb{R}^m$ and $\mathbb{R}^n$.
Let $\tau=\tau(\Phi)$ be the lower order of $1/\Phi$ at infinity
\[
\tau= \liminf_{t\to\infty} -\frac{\log \Phi(t) }{\log t} \in [\frac{n}{m},\infty].
\]
The most natural instances are power functions 
$\Phi(t)=t^{-\tau}$ with $\tau\ge n/m$, where
the right hand side is constant.
If $\tau$ is finite, in this case for simplicity
let $W_{\tau}=W(t\to t^{-\tau})$, 
which is essentially the set of matrices approximable
to order $\tau$. For $\tau=\infty$,
accordingly let $W_{\infty}= \cap_{\tau\in \mathbb{R} } W_{\tau}$ be the set of $m\times n$ Liouville matrices. Notice that if $\tau=n/m$, the set $W_{\tau}$
becomes the entire space of real $m\times n$ matrices
by a well-known variant of Dirichlet's Theorem. This also explains why our restriction \eqref{eq:2} is natural, although strictly speaking
the set $W(\Phi)$ does not necessarily 
consist of all real $m\times n$ matrices if $\tau<n/m$
for a general $\Phi$ (there are counterexamples if the {\em upper} order of $1/\Phi$ at infinity
exceeds $n/m$ and $\Phi$ fluctuates fast; it is however true for any monotonic $\Phi$).
Define the set
\[
Exact(\Phi)= W(\Phi)\setminus \bigcup_{c>1} W(\Phi/c) 
\]
of ``exactly'' $\Phi$ approximable $m\times n$ matrices.
Indeed, it consists of the matrices that are $\Phi$-approximable, but not $\Phi/c$-approximable
no matter how close $c>1$ is to $1$. It is suggestive 
to believe in the following

\begin{conjecture}  \label{konsch}
	The set $Exact(\Phi)$ has the same Hausdorff dimension
	as $W_{\tau}$ as soon as 
	\begin{equation} \label{eq:200}
	\Phi(t)=o(t^{-n/m}), \qquad t\to\infty.
	\end{equation}
\end{conjecture}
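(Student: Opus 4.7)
The plan is to establish matching upper and lower bounds for $\dim_H Exact(\Phi)$. The upper bound follows quickly from the lower order hypothesis: for every $\varepsilon > 0$ the definition of $\tau$ supplies a threshold $t_0$ such that $\Phi(t) \leq t^{-(\tau - \varepsilon)}$ for all $t \geq t_0$, so $Exact(\Phi) \subset W(\Phi) \subset W_{\tau - \varepsilon}$. Invoking the Bovey--Dodson matrix version of the Jarn\'{\i}k--Besicovitch theorem, which furnishes an explicit formula for $\dim_H W_{\tau-\varepsilon}$ continuous in $\varepsilon$ at $0$, and letting $\varepsilon \to 0$, yields $\dim_H Exact(\Phi) \leq \dim_H W_\tau$.

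The substantive direction is the lower bound. For any target $s < \dim_H W_\tau$ I would construct a Cantor-type compact set of Hausdorff dimension at least $s$ contained in $Exact(\Phi)$. The starting point is to apply the uniform variant of the Das--Fishman--Simmons--Urba\'nski variational principle advertised in the abstract to obtain a compact $K \subset W(\Phi)$ with $\dim_H K \geq s$, arranged so that each $A \in K$ carries a sequence of approximations $(\textbf{p}_k, \textbf{q}_k)$ synchronized with the subsequence of integer scales realizing the lower order $\tau$. To refine $K$ to a subset of $Exact(\Phi)$, one must excise matrices that happen to satisfy $\Vert A\textbf{q} - \textbf{p}\Vert \leq \Phi(\Vert \textbf{q}\Vert)/c$ infinitely often, for some $c > 1$. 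Here I would combine the Bandi--de Saxc\'e theorem on exact approximation, which supplies matrices with prescribed extremal approximation ratio for sufficiently regular $\Phi$, with Moshchevitin's inductive perturbation technique: at each Cantor level $k$, remove a small neighborhood around matrices whose $k$-th good approximation is more than $c_k$ times better than $\Phi(\Vert \textbf{q}_k \Vert)$, for a sequence $c_k \downarrow 1$. Choosing the excised radii summable and below the Cantor construction's scale preserves the Frostman exponent of the natural measure, while a diagonal argument over $c_k$ excludes membership in $\bigcup_{c > 1} W(\Phi/c)$.

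The principal obstacle is the interplay between the irregularity of $\Phi$ and ``accidental'' improved approximations at intermediate scales where $\Phi(t) \gg t^{-\tau}$. When $\Phi$ is only assumed decreasing, the denominators at which $\Phi(t) \approx t^{-\tau}$ may be arbitrarily sparse, so one must simultaneously guarantee infinitely many $\Phi$-approximations along this sparse subsequence while blocking every improvement at all other scales, uniformly in $c > 1$. This is the point at which the Moshchevitin technique must be pushed hardest, and it is likely the reason the statement is posed here as Conjecture~\ref{konsch} rather than as a theorem; one should realistically expect only partial progress, for instance restrictions on $\Phi$ that ensure a controlled ratio between consecutive ``extremal scales''. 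Hypothesis \eqref{eq:200} is what keeps the argument away from the Dirichlet regime, in which $W(\Phi)$ would coincide with the ambient space and $Exact(\Phi)$ could conceivably be empty.
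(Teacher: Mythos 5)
This statement is labeled Conjecture~\ref{konsch} in the paper and is \emph{not} proved there: the paper only establishes partial cases. Theorem~\ref{thm1} handles the larger set $Bad(\Phi)$ (not $Exact(\Phi)$) under the extra hypothesis that $\Phi$ is decreasing or satisfies \eqref{eq:1}, and Theorem~\ref{mo} obtains lower bounds for $Exact(\Phi)$ only under the much stronger restriction $\tau>\theta_{m,n}$, which is strictly more demanding than \eqref{eq:200}. So there is no ``paper proof'' of the conjecture to compare against, and your closing concession that one should ``realistically expect only partial progress'' is exactly right.

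The concrete gap in your sketch is in the step from $Bad(\Phi)$ to $Exact(\Phi)$. What the variational principle delivers (as in Lemma~\ref{l2}) is a set of matrices $A$ for which there is a \emph{fixed} constant $\kappa=\kappa(T)>0$, uniform in $A$, with $\Phi(\Vert\textbf{q}\Vert)\cdot\kappa\le\Vert A\textbf{q}-\textbf{p}\Vert$ for all non-exceptional $(\textbf{p},\textbf{q})$. That places $A$ in $W(\Phi)\setminus W(\Phi/c)$ for some $c>1$ depending only on $T$, i.e.\ in $Bad(\Phi)$. To land in $Exact(\Phi)$ you need the sharper statement that $A\notin W(\Phi/c)$ for \emph{every} $c>1$, which would require the implied constant $\kappa$ to approach $1$. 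But Theorem~\ref{dfsu02} forces $T=T(\varepsilon)$ to grow as $\varepsilon\to0$, and a large $T$ makes $\kappa$ small; your proposed remedy of excising shrinking neighborhoods with $c_k\downarrow1$ has no mechanism to preserve the Frostman exponent against these excisions, because the variational principle only controls dimension, not the fine structure of the Cantor set, and no lemma of that type is available in~\cite{dfsu}. This is precisely why the paper keeps the two methods disjoint: the variational principle yields $Bad(\Phi)$ at full dimension, while the Bandi--de~Saxc\'e/Moshchevitin route (column-extension via Lemma~\ref{lehre}) yields $Exact(\Phi)$ but only for $\tau>\theta$, leaving the full conjectured range untouched.
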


We will occasionally for brevity refer to the dimension
as ``full'' in this case.
For $m=n=1$ the conjecture is essentially the content of~\cite[Problems~1 \& 2]{bugmo}. 
Possibly \eqref{eq:200} can be relaxed in some ways, however \eqref{eq:2}
is insufficient at least for $m=n=1$ due to well-known facts about the Lagrange spectrum, as already pointed out in~\cite{bugmo}.
For $m=n=1$ and decreasing $\Phi$, 
in fact under a certain weaker assumption $(\ast)$ on $\Phi$,
the conjecture was verified by 
Bugeaud and Moreira~\cite[Theorem~3]{bugmo}, see 
also the preceding work by
Bugeaud~\cite{bugeaud, bugeaud2}, both dimensions being $2/(\tau+1)$. 
Only very recently, during the process of finishing the present paper, this was
extended to the case of simultaneous approximation to $m$ real numbers,
i.e. $n=1$, by Bandi and 
de Saxc\'e~\cite{bandi}, the full dimension being $(m+1)/(\tau+1)$.
The latter paper~\cite{bandi} again imposed $\Phi$ to be decreasing and satisfying \eqref{eq:200}.
While it is likely
that the method in~\cite{bandi} can be extended to the
general matrix setting by similar arguments,
in all other
cases of $m,n$ the conjecture remains yet unknown even assuming
monotonic $\Phi$. 
The case of non-monotonic $\Phi$ is not 
completely understood even for $m=n=1$.

Prior to~\cite{bandi},
in preprint~\cite{ward} from autumn 2023 as well, it is shown that again for $n=1$ and the special case $\Phi(t)=t^{-\tau}$, $\tau>1/m$,
the larger set of $\Phi$-badly approximable vectors defined by
\[
Bad(\Phi)= \bigcup_{c>1} W(\Phi)\setminus W(\Phi/c)=
W(\Phi)\setminus \bigcap_{c>1} W(\Phi/c)
\]
has full Hausdorff dimension $(m+1)/(\tau+1)$, a weaker result. The latter paper~\cite{ward} in turn improved on results from~\cite{vela}.
For any $\Phi$ we have the obvious inclusions
\begin{equation} \label{eq:RF}
Exact(\Phi)\subseteq Bad(\Phi)\subseteq 
\bigcap_{\epsilon>0} W_{\tau-\epsilon}.
\end{equation}
The methods
from~\cite{bandi, ward} are rather different.
We refer to~\cite{bandi, bugmo, ward} for a more 
comprehensive history of exact approximation and
more relevant literature. We want to add
that in~\cite[Corollary~7]{resm}
it is shown that again for $n=1$, for arbitrary $\Phi$ with $\tau>\tau_m$ large enough, the {\em packing} dimension of $Exact(\Phi)$ equals $m$, hence is full literally. Hereby  suitable $\tau_m$ can be explicitly determined, decreasing to the limit $(3+\sqrt{5})/2=2.6180\ldots$ with initial term 
$\tau_1=(5+\sqrt{17})/2=4.5615\ldots$. 
It is natural to believe in an analogue of Conjecture~\ref{konsch} for packing dimension. The method from~\cite{resm} is very different from any other previous work on the topic, including the present paper.

\noindent\textbf{Acknowledgements:} The paper
was started during a visit for an International Visitor Program at the University of Sydney. The author thanks the University of Sydney for the generous support. 

\section{New results}


In this paper, we show that under some rather mild additional regularity assumption on $\Phi$, the Hausdorff dimension of $Bad(\Phi)$ is full, i.e. the same as for $W_{\tau}$. Thereby we
properly extend the result in~\cite{ward} and complement~\cite{bandi}. 
Moreover, we obtain
an analogous result for packing dimension.
As an alternative to assuming $\Phi$ monotonic, a 
widely used simplifying condition used
in~\cite{bandi, ward}, 
the following rather mild condition (C1) suffices: For any $c>1$, there exists $d=d(c)>1$ such that
\begin{equation} \label{eq:1}  \tag{C1}
c^{-1}t\le \tilde{t}\le ct \quad \Longrightarrow \quad
d^{-1}\Phi(t)\le \Phi(\tilde{t})\le d\Phi(t),
\end{equation}
for any positive integers $t, \tilde{t}$. We now turn
towards our main result, a discussion on condition \eqref{eq:1} is given below.
Denote by $\dim_H$
and $\dim_P$ the Hausdorff and packing dimensions of a set respectively. 
We show

\begin{theorem}  \label{thm1}
    Let $\Phi$  satisfy \eqref{eq:2} and
    additionally assume that either condition \eqref{eq:1} holds
    or $\Phi$ is decreasing. Then 
    \begin{equation}  \label{eq:beggin}
    \dim_H(Bad(\Phi)) = \dim_H(W_{\tau})= (n-1)m+\frac{m+n}{1+\tau},
    \end{equation}
    and
     \begin{equation}  \label{eq:leggin}
    \dim_P(Bad(\Phi)) = \dim_P(W_{\tau})= mn.
    \end{equation}
\end{theorem}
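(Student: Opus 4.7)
The proof matches upper and lower bounds for both dimensions. The upper bounds follow from \eqref{eq:RF}: by the definition of the lower order, $\Phi(t)\le t^{-\tau+\epsilon}$ for all large $t$, hence $Bad(\Phi)\subseteq W_{\tau-\epsilon}$ for every $\epsilon>0$. Dodson's classical formula $\dim_H(W_{\tau'})=(n-1)m+(m+n)/(1+\tau')$ together with continuity in $\tau'$ yields the Hausdorff upper bound $(n-1)m+(m+n)/(1+\tau)$, while $\dim_P(Bad(\Phi))\le mn$ is trivial.

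For the Hausdorff lower bound, I would realize the Dodson value inside $W(\Phi)\setminus W(\Phi/c)\subseteq Bad(\Phi)$ for a fixed sufficiently large $c>1$. Using the lower-order hypothesis, extract a sparse sequence $t_k\to\infty$ with $t_{k+1}/t_k\to\infty$ and $\Phi(t_k)\ge t_k^{-\tau-\epsilon_k}$, $\epsilon_k\to 0$. At each level $k$, choose a privileged integer pair $(p_k,q_k)$ with $\Vert q_k\Vert\asymp t_k$ and restrict attention to matrices $A$ for which $\Vert Aq_k-p_k\Vert$ lies in the annulus $(\Phi(t_k)/c,\Phi(t_k)]$, while excluding those $A$ that admit a $\Phi/c$-approximation at some integer vector with norm in the multiplicative window between successive scales. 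Hypothesis \eqref{eq:1} (respectively, monotonicity) bounds the oscillation of $\Phi$ on each such window by a uniform factor, so the exclusion is manageable provided $c$ is a sufficiently large absolute constant depending on $\Phi$. A uniform variant of the variational principle of Das, Fishman, Simmons and Urba\'nski applied to this Cantor structure then certifies Hausdorff dimension at least $(n-1)m+(m+n)/(1+\tau)$.

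For the packing lower bound, I would enlarge the Hausdorff construction by allowing those matrix coordinates not constrained by the approximation pattern to vary freely over Lebesgue-positive ranges. Together with the countable stability $\dim_P(\bigcup_k S_k)=\sup_k\dim_P(S_k)$ and a check that the free parameters preserve the bad-approximation condition, this exhibits a subset of $Bad(\Phi)$ of packing dimension $mn$.

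The crux is the Hausdorff step, specifically the simultaneous preservation of infinitely many $\Phi$-approximations and the elimination of all $\Phi/c$-approximations without sacrificing dimensional freedom. The regularity assumption on $\Phi$ is needed precisely to control $\Phi$ at intermediate integer vectors: without it, isolated spikes of $\Phi$ could force unavoidable $\Phi/c$-approximations and destroy the construction. The \emph{uniform} variational principle, rather than its pointwise counterpart, is required because the effective approximation exponents $\tau+\epsilon_k$ vary with $k$ and must be managed simultaneously along the Cantor hierarchy.
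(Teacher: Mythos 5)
Your upper-bound step and your overall template-plus-variational-principle strategy for the Hausdorff lower bound are indeed what the paper does. However, there are three concrete gaps.

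First, the claim that the construction yields full dimension inside $W(\Phi)\setminus W(\Phi/c)$ for a \emph{fixed} sufficiently large $c>1$ is wrong, and the paper explicitly remarks on this. In the variational principle the closeness parameter $T=T(\varepsilon)$ blows up as $\varepsilon\to 0$, so the multiplicative gap $\Delta$ (hence the constant $c$) separating $\|A\textbf{q}_k-\textbf{p}_k\|$ from $\Phi(\|\textbf{q}_k\|)$ must grow with $T$, i.e.\ with $1/\varepsilon$. What one obtains is $\dim_H\bigl(W(\Phi)\setminus W(\Phi/c(\varepsilon))\bigr)\ge \delta-\varepsilon$ with $c(\varepsilon)\to\infty$; the union over $\varepsilon$ recovers $Bad(\Phi)$, but no single $c$ works. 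This is exactly the point where the present method is weaker than the direct Cantor construction of Koivusalo--Levesley--Ward--Zhang.

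Second, you assert that monotonicity by itself ``bounds the oscillation of $\Phi$ on each such window by a uniform factor.'' This is false: $\Phi(t)=e^{-t}$ is decreasing but drops by an unbounded factor on $[t,ct]$. The paper instead proves a selection lemma: for decreasing $\Phi$ with $\tau<\infty$ one can \emph{choose} a lacunary subsequence $t_k$ realizing the lower order on which the oscillation on $[c^{-1}t_k,ct_k]$ is bounded by a fixed $d=d(c,\tau)$. This choice is the technical heart of the monotonic case and cannot be waved away. Moreover the same lemma genuinely fails for $\tau=\infty$, and the paper must switch to a wholly separate argument there (Moshchevitin-style slicing plus the packing results of~\cite{resm} and~\cite{bandi}); your proposal offers nothing for that case.

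Third, your packing argument is both unnecessary and incomplete. The paper gets $\dim_P\ge mn$ for free from the \emph{same} template: the lacunarity of $t_k$ makes the long intervals where all $f_j\equiv 0$ (contraction rate $mn$) dominate in the $\limsup$ average, so $\overline{\delta}(\textbf{f})=mn$ and the packing part of Theorem~\ref{dfsu02} applies directly. Your proposal to build a separate product set with ``free coordinates'' would only give $\dim_P\ge \dim_P(X)+m(n-1)$ for the Hausdorff-constructed $X\subseteq\mathbb{R}^m$, which equals $mn$ only if $\dim_P(X)=m$; you have not established that and for the Cantor-type set you describe it is not automatic.
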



An alternative sufficient condition for \eqref{eq:leggin}, applicable for the smaller sets $Exact(\Phi)$, will be given in Theorem~\ref{mo} below.
The dimensions of $W_{\tau}$ in the theorem are well-known, see~\cite{ward}
for a reference. For the more interesting identities involving $Bad(\Phi)$,
the upper estimates are rather obvious, in view of the right inclusion of \eqref{eq:RF} it suffices to take the limit of the (continuous) dimension formulas
for the sets $W_{\tau-\epsilon}$ as $\epsilon\to 0$.
As in~\cite{bandi, ward}, the reverse lower bounds are the 
substantial result. 

We discuss \eqref{eq:1} in more detail now.
It can be readily checked that we only need to assume the implication holds for some $c>1$. 
Moreover, if \eqref{eq:1} holds for some pair 
$c_0, d_0=c_0^{\rho_0}$,
via partitioning $(1,\infty)$ into intervals $(1,c_0], [c_0,c_0^2), [c_0^2,c_0^4),[c_0^4,c_0^8),\ldots$, 
it can be seen that for any $c>1$ it holds for $d=\max\{ d_0, c^{2\rho_0} \}$. In particular if \eqref{eq:1} holds then we can assume $\rho(c)= \log d/\log c\ll_{\Phi,\epsilon} 1$ bounded on any interval $c\in [1+\epsilon,\infty)$.
Essentially, condition \eqref{eq:1} says that $\Phi$ does not
deviate too fast on short intervals.
It is in particular true for all power functions $t\to t^{-\tau}$, as
treated in~\cite{ward}.
It is easily seen that the assumptions $\Phi$ being monotonic and \eqref{eq:1} are independent. Moreover \eqref{eq:1} it is independent from 
the condition $(\ast)$ from~\cite[\S~2]{bugmo} for $m=n=1$ already mentioned in~\S~\ref{intro},
which can be stated as $\Phi(ct)\le 4 c^{\rho}\Phi(t)$ for any 
$c>1$ and some absolute $\rho\in\mathbb{R}$, uniformly 
in $t\ge t_0$. 
Hence $\Phi$ may decay arbitrarily fast, so for example $\Phi(t)=e^{-t}$ satisfies $(\ast)$
but not \eqref{eq:1}. On the other hand, for example the piecewise defined function $\Phi(t)=2^N t^{-3}$ for $t\in[2^N,2^{N+1})$, $N\in \mathbb{N}$, satisfies \eqref{eq:1} but not $(\ast)$. Note that $\rho(c)\to \infty$ as $c\to 1^{+}$ for the latter.

We compare Theorem~\ref{thm1} with the claims from~\cite{bandi, bugmo, ward, resm} discussed in~\S~\ref{intro}.
Claim~\eqref{eq:beggin} refines results 
from~\cite{bandi, ward} in two aspects: Firstly, we can deal with the general matrix setup instead of just simultaneous approximation $n=1$.
Secondly, our result applies to a wider class of approximation functions that are
possibly not decreasing
(especially~\cite{ward} restricted to $t\to t^{-\tau}$, $\tau>1/m$). Our above comparison of \eqref{eq:1} versus condition $(\ast)$ from~\cite{bugmo} shows that even for $m=n=1$ there
is some new information derived from \eqref{eq:beggin}.
In particular
for $n=1$ and decreasing $\Phi$ formula \eqref{eq:beggin} is implied by the
stronger claims from~\cite{bandi} (however we give an independent proof).
It may however be viewed as a disadvantage
that our proof relies on the heavy machinery in~\cite{dfsu}. 
Also, as an artifact of these results
in~\cite{dfsu}, we cannot provide 
a fixed
constant $c>1$ such that the full dimension claim holds for the set
$W(\Phi)\setminus W(\Phi/c)\subseteq Bad(\Phi)$, which
is possible in~\cite{ward}.
The packing dimension
result \eqref{eq:leggin} can be regarded a variant of~\cite[Corollary~7]{resm}.
Advantages are that it applies for matrix setting 
and only the weaker natural bound $\tau\ge n/m$ on $\tau(\Phi)$ is needed. However, we must assume \eqref{eq:1} or monotonic $\Phi$ instead
and cannot provide any information on
the smaller sets $Exact(\Phi)$ treated in~\cite{resm}.
In terms of proofs, our strategy below
will use similar arguments as in~\cite{bandi}, also based on parametric geometry of numbers (template theory), however with some twists.
Most notably, we directly apply the 
variational principle from~\cite{dfsu},
in place of the Cantor set construction
in~\cite{bandi} that allows for treating
the smaller set $Exact(\Phi)$.

In fact our proof of Theorem~\ref{thm1} below will show the following refined claim. As usual $a\asymp_{.} b$ means
$a\ll_{.} b\ll_{.} a$, where $a\ll_{.} b$ in turn means $a\le cb$
for some constant $c>0$ depending on the subscript variables only, while $a,b$ may depend on other parameters as well.
If there are no subscript variables the constants are absolute.

\begin{theorem}  \label{Thm2}
    Let $\Phi$ be as in Theorem~\ref{thm1}. Let $\chi: \mathbb{N}\to \mathbb{R}$ by any function/sequence (increasing arbitrarily fast). Then the metrical conclusions of Theorem~\ref{thm1} hold for the set of real $m\times n$ matrices $A$ for which the following two properties hold:
    \begin{itemize}
        \item There exists a sequence $(t_k^{\prime})_{k\ge 1}$ of integers, depending on $A$, such that
        \[
        t_k^{\prime} > \chi(t_k^{\prime}),
        \]
        and
        \[
        \Phi(t_k^{\prime})= t_k^{\prime -\tau+o(1)}, \qquad k\to\infty,
        \]
        and
        \[
        \Vert \textbf{q}_k\Vert = t_k^{\prime}, \qquad
        \Phi(t_k^{\prime})\ll_{A} \Vert A \textbf{q}_k-\textbf{p}_k\Vert < \Phi(t_k^{\prime}), \qquad k\ge 1,
        \]
        holds for some integer vector $(\textbf{p}_k, \textbf{q}_k)\in \mathbb{Z}^{m}\times (\mathbb{Z}^{n}\setminus \{ \boldsymbol{0}\})$.
        \item For any $(\textbf{p}, \textbf{q})\in \mathbb{Z}^{m}\times (\mathbb{Z}^{n}\setminus \{ \boldsymbol{0}\})$ not among the 
        $(\textbf{p}_k, \textbf{q}_k)$ we have
        \[
        \Vert A \textbf{q}-\textbf{p}\Vert \gg_A \Vert \textbf{q}\Vert^{-n/m}, \qquad k\ge 1.
        \]
    \end{itemize}
\end{theorem}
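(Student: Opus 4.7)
The plan is to prove Theorem~\ref{Thm2} by the same template/variational-principle machinery that yields Theorem~\ref{thm1}, but applied to a more carefully designed template that encodes the two bulleted properties in its shape. Concretely, I would build a template $\mathcal{T}$ (in the sense of parametric geometry of numbers of \cite{dfsu}) for which \emph{every} matrix $A$ whose combined-lattice trajectory tracks $\mathcal{T}$ sufficiently closely automatically satisfies both bullets, and then invoke the (uniform variant of the) variational principle from \cite{dfsu} to conclude that the resulting set has Hausdorff dimension $(n-1)m+(m+n)/(1+\tau)$ and packing dimension $mn$.

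First I would choose a sequence of integers $(t_k^{\prime})$ that is simultaneously (a) so sparse that $t_k^{\prime}>\chi(t_k^{\prime})$ for every $k$, and (b) realises the lower order, i.e.\ $\Phi(t_k^{\prime})=t_k^{\prime-\tau+o(1)}$. Existence of (b) is immediate from the definition of $\tau=\tau(\Phi)$, and passing to any subsequence of a sequence realising the $\liminf$ preserves (b); the sparseness (a) is then a free parameter, since inserting arbitrarily large gaps does not affect the lower order. Condition \eqref{eq:1} (or monotonicity) allows one to pass between $\Phi$ at $t_k^{\prime}$ and at nearby heights without changing constants. I would then define $\mathcal{T}$ to coincide with the ``flat'' Dirichlet template on all of $[1,\infty)$ outside narrow windows around the real times $s_k=\log t_k^{\prime}$, while inside each such window one minimum is forced to drop to the value $-\tau\log t_k^{\prime}$ and the others readjust to keep the trace zero, exactly as in the template used to handle $W_{\tau}$ in the proof of Theorem~\ref{thm1}. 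Because the sparse portion of $\mathcal{T}$ contributes nothing to the variational cost (only the $\liminf$ of the template matters), the DFSU dimension formula applied to $\mathcal{T}$ yields the values asserted in Theorem~\ref{thm1}.

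Next I would read off the two bulleted properties from the trajectory. Outside the slots the template forces the first successive minimum of the combined lattice at height $t=e^{s}$ to stay above a uniform constant, which is equivalent to $\|A\mathbf{q}-\mathbf{p}\|\gg_{A}\|\mathbf{q}\|^{-n/m}$ for every integer vector $(\mathbf{p},\mathbf{q})$ relevant at that height, giving the second bullet. Inside the slot around $s_{k}$ the template minimum matches the prescribed value $-\tau\log t_{k}^{\prime}$ from both sides; translating this two-sided matching back to lattice language produces an integer vector $(\mathbf{p}_{k},\mathbf{q}_{k})$ with $\|\mathbf{q}_{k}\|=t_{k}^{\prime}$ (after slightly adjusting the slot position by at most an additive constant using \eqref{eq:1}) and $\Phi(t_{k}^{\prime})\ll_{A}\|A\mathbf{q}_{k}-\mathbf{p}_{k}\|<\Phi(t_{k}^{\prime})$, which is the first bullet.

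The main obstacle is precisely this two-sided (uniform) matching, which is the reason Theorem~\ref{thm1} already relies on a \emph{uniform} variant of \cite{dfsu} rather than the bare principle: the plain variational principle only guarantees matrices whose trajectory is close to $\mathcal{T}$, but for the lower bound $\|A\mathbf{q}_{k}-\mathbf{p}_{k}\|\gg_{A}\Phi(t_{k}^{\prime})$ one needs to know that no \emph{additional} integer vector inside the slot produces a substantially better approximation than the one the template prescribes. This is ensured by the uniform variant, which controls both successive minima from above and from below along the whole trajectory. Once this is in place, the set defined by the two bullets sits between a subset of the template-tracking matrices (giving the lower dimension bound) and $Bad(\Phi)$ (to which the upper bounds of Theorem~\ref{thm1} apply), so both dimensions equal the asserted values.
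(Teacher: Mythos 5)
Your proposal is correct and follows essentially the same route as the paper: construct the same one-drop-at-a-time template from a lacunary sequence realising the lower order, prove that any matrix whose combined graph is uniformly close to it has local minima of $h_1$ that are $O(1)$-close to those of $f_1$ (with all other minima bounded), translate that back to the two bulleted properties with $t_k':=\Vert\textbf{q}_k\Vert$, and conclude via the uniform-in-template variational principle. The only slips are cosmetic: the depth of the drop is $f_1(q_k)=\tfrac{n-m\tau}{m+n}\log t_k-\log\Delta$ rather than $-\tau\log t_k'$, both $\underline\delta$ and $\overline\delta$ (not only the $\liminf$) are needed for the two dimension claims, and the uniformity of $T$ in $\textbf{f}$ is used to break the circularity with the free parameter $\Delta$ rather than to control the successive minima from both sides (the latter is already part of the notion of $T$-closeness).
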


In view of \eqref{eq:2} this clearly implies Theorem~\ref{thm1}. In order to apply~\cite{dfsu},
the implied constants in the theorem will tend to infinity to derive the full dimension results. It may be true that 
analogous extensions of the results stated in~\cite{bandi}
can be derived from their method as well, as the proofs show
substantial similarities.

Finally, we want to state some new claims for the sets $Exact(\Phi)$.
From~\cite{bandi, resm}
when combined with a method originating in~\cite{ngm}, under certain conditions
on $\Phi$ we derive a non-trivial lower bound for the Hausdorff dimension and full packing dimension in the matrix setting.
Let 
\[
\theta=\theta_{m,n}:=\frac{\max\{ n+1,m+2\}}{m} > 1.
\]
Then our result reads as follows.

\begin{theorem}  \label{mo}
	The following claims hold:
	\begin{itemize}
	\item[(a)] Let $\Phi$ induce $\tau>\theta$ and be decreasing. Then
	  \begin{equation} \label{eq:1111}
	\dim_H(Exact(\Phi))\ge m(n-1)+\frac{m+1}{1+\tau}.
	\end{equation}
	\item[(b)] Let $\Phi$ induce $\tau>\max\{ \theta, \frac{5+\sqrt{17}}{2} \}$ (not necessarily decreasing). Then
	\begin{equation} \label{eq:222}
	\dim_H(Exact(\Phi))\ge m(n-1)
	\end{equation}
	and
		  \begin{equation} \label{eq:333}
		\dim_P(Exact(\Phi))= mn.
		\end{equation}
	\end{itemize}
\end{theorem}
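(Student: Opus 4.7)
\textbf{Proof plan for Theorem~\ref{mo}.}

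My strategy is a product-type construction in the spirit of Moshchevitin~\cite{ngm}: I lift exact approximation results from simultaneous approximation ($n=1$) to the $m\times n$ matrix setting by placing a well-chosen $m$-vector $\alpha$ as the first column of $A$ and choosing the remaining $n-1$ columns $B=(A_{2},\ldots,A_{n})\in\mathbb{R}^{m(n-1)}$ Lebesgue-generically. For $A=[\alpha\mid B]$ and $q=(q_{1},q'')\in\mathbb{Z}\times\mathbb{Z}^{n-1}$ one has $Aq=q_{1}\alpha+Bq''$; in particular the approximations of $A$ coming from vectors $q$ with $q''=0$ reduce to vector approximations of $\alpha$, so any vector-exactly $\Phi$-approximable $\alpha$ (in the $n=1$ sense) immediately supplies the matrix approximations required for $A\in W(\Phi)$.

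For part~(a), apply the Bandi--de~Saxc\'e theorem~\cite{bandi} to $(\Phi,n=1)$, which is admissible since $\Phi$ is decreasing and $\Phi(t)=o(t^{-1/m})$ (as $\tau>\theta>1/m$): this yields a set $E_{0}\subseteq\mathbb{R}^{m}$ of Hausdorff dimension $(m+1)/(1+\tau)$ of vector-exactly $\Phi$-approximable vectors. For each $\alpha\in E_{0}$ let $G(\alpha)\subseteq\mathbb{R}^{m(n-1)}$ denote the set of $B$ for which $A=[\alpha\mid B]\in Exact(\Phi)$ in the matrix sense. I will show $G(\alpha)$ has full Lebesgue measure, hence Hausdorff dimension $m(n-1)$; the product inequality $\dim_{H}(E_{0}\times G(\alpha))\ge\dim_{H}(E_{0})+m(n-1)$ then delivers~\eqref{eq:1111}. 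For part~(b) the same scheme applies with~\cite{bandi} replaced by~\cite[Corollary~7]{resm} for $n=1$, valid without monotonicity since $\tau>(5+\sqrt{17})/2\ge\tau_{m}$: one obtains a set $E\subseteq\mathbb{R}^{m}$ of vector-exactly $\Phi$-approximable vectors with $\dim_{P}(E)=m$. As $G\subseteq\mathbb{R}^{m(n-1)}$ has full Lebesgue measure, its lower box dimension equals $m(n-1)$, so the packing product formula gives the equality $\dim_{P}(E\times G)=\dim_{P}(E)+m(n-1)=mn$, proving~\eqref{eq:333}; fixing any $\alpha_{0}\in E$ yields $\dim_{H}(\{\alpha_{0}\}\times G)=m(n-1)$ and hence~\eqref{eq:222}.

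The main obstacle is proving that $G(\alpha)$ has full Lebesgue measure. Since $A\in W(\Phi)$ is automatic from the $\alpha$-diagonal approximations, the task reduces to ruling out, for every $c>1$, infinitely many $(p,q)\in\mathbb{Z}^{m}\times\mathbb{Z}^{n}$ with $q''\ne 0$ satisfying $\|Aq-p\|\le\Phi(\|q\|)/c$. For fixed such $q$, as $B$ ranges over a unit cube the residue $Bq''\bmod\mathbb{Z}^{m}$ is uniformly distributed on $(\mathbb{R}/\mathbb{Z})^{m}$, so the Lebesgue measure of ``bad $B$'' at $q$ is $\asymp(\Phi(\|q\|)/c)^{m}$; summing over $q$ with $\|q\|$ in a dyadic shell yields a convergent series under $\tau>n/m$. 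The stronger assumption $\tau>\theta=\max\{n+1,m+2\}/m$ is precisely what Moshchevitin's argument~\cite{ngm} needs to upgrade this Borel--Cantelli estimate to a uniform statement across all $c>1$ simultaneously and to ensure that the $\alpha$-driven approximations remain the essentially unique optimal matrix approximations of $A$, guaranteeing $A\in Exact(\Phi)$ rather than merely $A\in W(\Phi)$. Combining this uniform generic bound with the vector-exact inputs of~\cite{bandi} for~(a) and~\cite{resm} for~(b), and with the standard product dimension formulas, completes both assertions.
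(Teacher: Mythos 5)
Your proposal is essentially the paper's own proof: you lift vector-exact approximation ($n=1$) to matrix-exact approximation by fixing a first column $\alpha$ and showing, via a Moshchevitin-style Borel--Cantelli argument, that Lebesgue-almost every choice of the remaining $n-1$ columns preserves membership in $Exact(\Phi)$; you then feed in the Bandi--de~Saxc\'e result for part~(a) and the packing-dimension result of~\cite{resm} for part~(b), and conclude with Tricot's product dimension inequalities. This is exactly the structure of the paper's Lemma~\ref{lehre} combined with its concluding arguments.

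One small point in your justification is off. You write that summing the measure estimates ``yields a convergent series under $\tau>n/m$,'' and that the stronger hypothesis $\tau>\theta$ serves to ``upgrade this\ldots to a uniform statement across all $c>1$'' and to ``ensure that the $\alpha$-driven approximations remain the essentially unique optimal matrix approximations.'' Neither of those is where the condition $\tau>\theta$ is consumed: uniformity over $c>1$ is free (the bad sets for $c$ increase as $c\to 1^+$, so a countable union suffices), and the separation between the $q''=0$ approximations (controlled by $\alpha\in Exact^{m,1}(\Phi)$) and the $q''\ne 0$ approximations (controlled by Borel--Cantelli) also requires no extra assumption. In the paper the hypothesis $\tau>\theta$ enters directly through the convergence of the Borel--Cantelli sum, via the (admittedly lossy) estimate $\mu(\Omega_N)\ll\Phi(N)^m N^{\max\{m+1,n\}}(\log N)^{\delta(n,m+1)}$, which needs $m\tau>\max\{m+2,n+1\}$ to converge. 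Your heuristic that the per-$q$ measure is $\asymp\Phi(\|q\|)^m$ and hence $\tau>n/m$ would already give convergence is a legitimate sharpening of that bound, but it is not what the paper does; if you pursue that route you should carry it through carefully rather than reverting to the weaker $\tau>\theta$ with an inaccurate explanation. Since the theorem only asserts the conclusion under $\tau>\theta$, your plan is sound either way.
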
 

\begin{remark}  \label{rema}
	The weaker property $(\ast)$
	from~\cite{bugmo} discussed in~\S~\ref{intro} instead of $\Phi$ decreasing, together with $\tau>\theta$, suffice to settle the 
	intermediate bound
	\[
	\dim_H(Exact(\Phi))\ge m(n-1)+\frac{2}{\tau+1},
	\]
	in between \eqref{eq:1111}, \eqref{eq:222}.
	In particular for $\tau=\infty$ we get full
	dimension, refining \eqref{eq:beggin} from Theorem~\ref{thm1} in this case. See \S~\ref{RR} below
	for a proof.
\end{remark}

For $\tau=\infty$, the Hausdorff dimension in \eqref{eq:1111} and \eqref{eq:222} is full, a refinement of Theorem~\ref{thm1}.
The condition $\tau>\theta>n/m$ is more restrictive than \eqref{eq:2} and \eqref{eq:200}.
It can be slightly relaxed as the proof of Lemma~\ref{lehre} below shows, however we cannot decrease the bound for $\tau$ with our method,
in particular \eqref{eq:200} is insufficient in both claims.
For $n=1$ estimate \eqref{eq:1111}
is implied by~\cite{bandi}. The value $(5+\sqrt{17})/2$ in the condition for \eqref{eq:222},	\eqref{eq:333} derived from~\cite{resm}
can be relaxed if $m>1$.

\section{Combined graphs, templates and the variational principle}

For $1\le j\le m+n$ and $A\in \mathbb{R}^{m\times n}$,
and a parameter $Q\ge 1$,
let $\lambda_j(Q)$ be the $j$-th successive minimum
with respect to the parametric convex body
\[
K(Q)= [-Q,Q]^n \times [-Q^{-n/m},Q^{-n/m}]^m \subseteq \mathbb{R}^{m+n}
\]
and the lattice 
\[
\{  (\textbf{q},A\textbf{q}-\textbf{p}):\; \textbf{q}=(q_1,\ldots,q_n)\in \mathbb{Z}^n,\; \textbf{p}=(p_1,\ldots,p_m)\in \mathbb{Z}^m  \}\subseteq \mathbb{R}^{m+n}.
\]
Further derive continuous piecewise linear functions
\[
h_j(q)= \log ( \lambda_j(e^q)), \qquad 1\le j\le m+n,\; q>0. 
\]
Thereby every real $m\times n$ matrix $A$ gives rise to successive minima functions $\textbf{h}=(h_1,\ldots,h_{m+n})$
on $(0,\infty)$.
We call the union of the graphs $(q,h_j(q))$, $q>0$ 
over $1\le j\le m+n$ in $\mathbb{R}^2$ the combined graph associated to $A$. The above definition is formally slightly different than the formulation using homogeneous dynamics from~\cite{dfsu}. We rather follow
the setup of Schmidt and Summerer~\cite{ss}
but for general $m,n$, however the two formalisms are easily verified to be equivalent.
Define the trajectory associated to $(\textbf{p},\textbf{q})\in \mathbb{Z}^{m}\times(\mathbb{Z}^{n}\setminus\{\textbf{0}\})$
as 
\begin{equation} \label{eq:traje}
T_{\textbf{p}, \textbf{q}}(q)= \max \left\{  \log \Vert \textbf{q}\Vert - \frac{1}{n}q \;,\; 
\log \Vert A\textbf{q}-\textbf{p}\Vert + \frac{1}{m}q \right\}.
\end{equation}
Then $h_1(q)$ is the minimum of all trajectories
\[
h_1(q)=\min_{\textbf{p}, \textbf{q}} T_{\textbf{p}, \textbf{q}}(q)= \min_{\textbf{p}, \textbf{q}} \max \left\{  \log \Vert \textbf{q}\Vert - \frac{1}{n}q \;,\; 
\log \Vert A\textbf{q}-\textbf{p}\Vert + \frac{1}{m}q \right\}
\]
with minimum taken over all integer vectors $(\textbf{p},\textbf{q})\in \mathbb{Z}^{m}\times(\mathbb{Z}^{n}\setminus\{\textbf{0}\})$. More generally,
for $1\le j\le m+n$, we have
\[
h_j(q)= \min \max_{1\le i\le j} T_{\textbf{p}_i, \textbf{q}_i}(q)
\]
with the minimum taken over all tuples of $j$ linearly independent integer vectors $(\textbf{p}_i, \textbf{q}_i)\in \mathbb{Z}^{m}\times (\mathbb{Z}^n\setminus\{ \textbf{0}\})$.
Since any $h_j$ locally coincides with some trajectory,
it is clear that it is indeed piecewise linear with slopes  among $\{-1/n,1/m\}$.

We next introduce templates.
Following~\cite[Definition~4.1]{dfsu}, an $m\times n$-template consists of piecewise linear functions 
$\textbf{f}(q)=(f_1(q), \ldots, f_{m+n}(q))$ with a certain finite
set of slopes, including $\{ -1/n,1/m\}$ as smallest and largest elements, and several other restrictions, 
we omit stating the details here for brevity.
We highlight that the sum of the $f_j(q)$, thus also of the derivatives where defined, vanishes on $q\in(0,\infty)$.

Each template gives rise to a local contraction rate $\delta(q)\in [0,mn]$ for each $q>0$ where $\textbf{f}$ is differentiable, depending on the slope
vector $\textbf{f}^{\prime}(q)$ at $q$, again we omit details
here to be found in~\cite{dfsu}. The lower and upper limits 
\[
\underline{\delta}(\textbf{f})=\liminf_{Q\to\infty} \frac{ \int_{0}^{Q} \delta(q) \; dq }{Q},\qquad \overline{\delta}(\textbf{f})=\limsup_{Q\to\infty} \frac{ \int_{0}^{Q} \delta(q)\; dq }{Q}
\]
are called lower and upper contraction rate of the template.
Denote $\mathcal{T}_{m,n}$ the set of $m\times n$ templates. 

\begin{definition}
We say a combined graph $\textbf{h}$ is
$T>0$ close to a template $\textbf{f}\in\mathcal{T}_{m,n}$ if 
\[
\sup_{q>0} \max_{1\le j\le m+n} \vert f_j(q)-h_j(q)\vert\le T.
\]
\end{definition}
We identify $A\in\mathbb{R}^{m\times n}\cong \mathbb{R}^{mn}$
giving rise to the natural $mn$-dimensional Lebesgue measure
on the sets of $m\times n$ matrices.
A partial claim of the landmark result~\cite[Theorem~4.7]{dfsu} can be formulated as follows:



\begin{theorem}[Das, Fishman, Simmons, Urba\'nski]  \label{dfsu02}

Let
$\textbf{f}\in \mathcal{T}_{m,n}$ is a template with lower and upper contraction rates $\underline{\delta}(\textbf{f})$ and $\overline{\delta}(\textbf{f})$.
    Given $\varepsilon>0$, there is some absolute $T=T(\varepsilon)>0$ independent of $\textbf{f}$ so that
    the set of 
    real matrices $A\in\mathbb{R}^{m\times n}$ inducing 
    a combined graph $\textbf{h}(A)$ that is $T$ close to $\textbf{f}$
    has Hausdorff dimension at least 
    $\underline{\delta}(\textbf{f})-\varepsilon$ and 
    packing dimension at least
    $\overline{\delta}(\textbf{f})-\varepsilon$.
\end{theorem}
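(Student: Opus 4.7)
The plan is to realize each template $\mathbf{f}\in\mathcal{T}_{m,n}$ as the (approximate) successive-minima profile of a Cantor-like set of matrices, and to read off the dimension bounds by a mass-distribution argument. I would pick a rapidly increasing sequence $Q_1<Q_2<\cdots$ of parameters with $Q_k\to\infty$, chosen so that distinct scales decouple, and recursively build a nested family of closed cells $E_k\supseteq E_{k+1}$: at step $k$, each cell in $E_k$ is the collection of $m\times n$ matrices for which some fixed tuple of integer vectors realizes $\mathbf{f}$ up to error $T$ on the interval $[0,\log Q_k]$. Setting $E_{\mathbf{f}}=\bigcap_k E_k$, every $A\in E_{\mathbf{f}}$ has combined graph $\mathbf{h}(A)$ that shadows $\mathbf{f}$ to within $T$ by construction, so it suffices to bound $\dim_H(E_{\mathbf{f}})$ and $\dim_P(E_{\mathbf{f}})$ from below.

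The heart of the proof is the quantitative branching estimate. At the transition from $Q_k$ to $Q_{k+1}$, each surviving cell should split into approximately
\[
\exp\!\Bigl(\int_{\log Q_k}^{\log Q_{k+1}} \delta(q)\,dq\Bigr)
\]
essentially disjoint subcells of Euclidean diameter comparable to $Q_{k+1}^{-(m+n)/(mn)}$, up to factors depending on $T$ and $\varepsilon$. This count reflects exactly the combinatorial degrees of freedom captured by the local contraction rate: at each parameter $q$, the slope vector $\mathbf{f}^{\prime}(q)$ (taking values in $\{-1/n,1/m\}$ and summing to zero) dictates which integer vectors must appear or vanish as short lattice points, and $\delta(q)$ records the dimension of the moduli space of matrices compatible with that transition. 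The estimate itself is proved via Minkowski's second theorem applied to the parametric convex bodies $K(Q)$, combined with a careful discussion of the finitely many possible slope-vector types.

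Granting these geometric counts, I would define a Bernoulli measure $\mu$ on $E_{\mathbf{f}}$ assigning equal mass to subcells at each stage. Using the defining $\liminf$ of $\underline{\delta}(\mathbf{f})$, one verifies $\mu(B(A,r))\ll r^{\underline{\delta}(\mathbf{f})-\varepsilon}$ uniformly for $A\in E_{\mathbf{f}}$ and all sufficiently small $r>0$; the mass distribution principle (Frostman's lemma) then gives $\dim_H(E_{\mathbf{f}})\geq \underline{\delta}(\mathbf{f})-\varepsilon$. For packing dimension, the same measure is used, but now it suffices to control $\mu(B(A,r))$ along a subsequence of radii $r_k$ corresponding to the scales $Q_k$ that realize the $\limsup$ in $\overline{\delta}(\mathbf{f})$, which matches the standard characterization of packing dimension in terms of upper local dimension.

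The main obstacle is the uniform-in-$\mathbf{f}$ geometric bookkeeping that justifies the branching count. Concretely one must (i) choose $T=T(\varepsilon)$ large enough that the subcell partition exists simultaneously for every template, yet independent of which $\mathbf{f}$ is being realized; (ii) control overlaps between subcells tightly so that the Bernoulli mass assignment produces a genuine Frostman-type upper bound rather than an averaged one; and (iii) handle the finitely many discrete slope-transition types without inflating constants. Once these estimates are in place, integrating $\delta(q)$ on $[0,Q]$ as $Q\to\infty$ and passing $\varepsilon\to 0$ immediately delivers both dimension bounds, completing the variational principle in the form stated.
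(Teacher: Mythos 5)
The paper does not prove Theorem~\ref{dfsu02}; it is imported as a black box, namely a partial reformulation of~\cite[Theorem~4.7]{dfsu}, so there is no ``paper's own proof'' to compare against. Your sketch does capture the broad strategy that underlies such variational principles (realize the template by a Cantor-like nest of cells, estimate the branching at each scale by $\exp\bigl(\int\delta\bigr)$, build a Bernoulli-type measure, invoke the mass distribution principle, and use upper local dimension along a subsequence for packing), and in that sense it is not a wrong route. But it is an outline, not a proof. The central quantitative step, that a surviving cell at parameter $Q_k$ splits into roughly $\exp\bigl(\int_{\log Q_k}^{\log Q_{k+1}}\delta(q)\,dq\bigr)$ essentially disjoint subcells of the asserted diameter, is exactly the content of the theorem and is asserted with a gesture toward Minkowski's second theorem and ``moduli spaces'' rather than derived. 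In the actual source this is established via a long development in the language of homogeneous dynamics on $\mathrm{SL}_{m+n}(\mathbb{R})/\mathrm{SL}_{m+n}(\mathbb{Z})$ and a bespoke framework of ``fractal structures''; a direct counting argument in matrix space of the kind you indicate would need to reprove that machinery and is far from straightforward, in particular the disjointness/overlap control you flag as item~(ii).

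More importantly for the present paper, the specific feature being invoked is the \emph{uniformity}: $T=T(\varepsilon)$ is independent of the template $\mathbf{f}$. This is not cosmetic; the abstract calls it ``a uniform variant,'' and the remark after the monotone case explains that without it one would instead need a uniform version of Lemma~\ref{llama}. Your sketch lists uniformity in $\mathbf{f}$ among the ``main obstacles'' but offers no mechanism for achieving it, whereas this is precisely the point a reader of the present paper would want a proof of this theorem to secure. So the gap is twofold: the branching estimate is stated rather than proved, and the uniformity in $\mathbf{f}$, the one nontrivial refinement the paper actually needs, is left entirely open.
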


\section{Proof of Theorem~\ref{thm1} upon assumption \eqref{eq:1} }

%


\subsection{Construction of a template}  \label{3.1}

Let $\Phi$ satisfy \eqref{eq:2}, \eqref{eq:1}.
We first construct a template $\textbf{f}=\textbf{f}(\Phi)$, in a 
similar way to the proof of~\cite[Theorem~3.17]{dfsu}.
Take a lacunary sequence of integers $t_k>1$ such that 
\begin{equation}  \label{eq:aut}
    \lim_{k\to\infty} -\frac{\log \Phi(t_k)}{ \log t_k}= \tau, 
\end{equation}
where by lacunary we mean
\begin{equation}  \label{eq:lacu}
\lim_{k\to\infty} \frac{\log t_{k+1} }{ \log t_k} = \infty.
\end{equation}
Note $\tau=\infty$ is possible.
We restrict to the class $\mathcal{C}\subseteq \mathcal{F}_{m,n}$ of templates $\textbf{f}=(f_1,\ldots,f_{m+n})$ with the property
\[
f_2(q)=f_3(q)=\cdots=f_{m+n}(q)=-\frac{f_1(q)}{m+n-1}, \qquad q>0.
\]
Moreover we exclude the template with all component functions constant $0$ for all $q\ge q_0$. Then for each such template
$\textbf{f}\in \mathcal{C}$, the first component
$f_1$ has slopes among $\{-1/n,0,1/m\}$, and we have 
\[
f_1(q)=f_2(q)=\cdots=f_{m+n}(q)=0
\] 
for $q$ on a sequence of pairwise disjoint closed intervals 
$I_1=[a_1,b_1],I_2=[a_2,b_2],\ldots$ on the first axis,
where $a_1<b_1<a_2<b_2<\ldots$.

For simplicity let $c_{k}=a_{k+1}$ so that $J_k=(b_k,c_{k})$ denotes the open interval between $I_k$ and $I_{k+1}$. Then in each $J_k$, the function $f_1$ has a unique local minimum, say at $q_k\in J_k$, and $f_1$ decreases with slope $-1/n$ on $(b_k,q_k)$ and increases with slope $1/m$ on the intervals $(q_k,c_{k})$. The remaining $f_j$, $2\le j\le m+n$, have accordingly slopes
$n^{-1}/(m+n-1)$ and $-m^{-1}/(m+n-1)$ on the respective intervals.
We choose our concrete template $\textbf{f}\in \mathcal{C}$ so that these local minima of $f_1$ are at positions $(q_k,f_1(q_k))$ with 
\begin{equation} \label{eq:iden}
    q_k=(\log t_k - \log \Phi(t_k)) \frac{1}{ m^{-1}+n^{-1} }, 
\qquad f_1(q_k)= \log t_k - \frac{1}{n} q_k-\log \Delta,
\end{equation}
for $\Delta\ge 1$ to be chosen later.
This resembles the minimum of a fictive trajectory $T_{\textbf{p},\textbf{q}}$ 
with $\Vert \textbf{q}\Vert=t_k$ and $\Vert A\textbf{q}-\textbf{p}\Vert= \Phi(\Vert \textbf{q}\Vert)/\Delta$. 
It is easily verified that
the $q_k$, and thus the $t_k$, uniquely determine the template and
\begin{equation}  \label{eq:preis}
b_k=q_k+nf_1(q_k),\qquad c_{k}=q_k-mf_1(q_k)
\end{equation}
hold for $k\ge 1$.
Recall that $f_1(q_k)<0$, so indeed $b_k<c_k$.\\

\begin{figure}
\centering
\begin{tikzpicture}
\draw[black, ->] (0,0) -- (10.5, 0) node[anchor=west]{$q$};
\draw[black, ->] (0, 0) -- (0, 2.5) node[anchor=south]{$\textbf{f}(q)$};
\filldraw[black] (1.5,0) circle (1pt) node[anchor=north]{$b_k$};
\draw[black] (1.5, 0) -- (3.3, -1.2) node[pos=0.4 , below]{$-\frac{1}{n}$};
\draw[black] (4.5 , 0) -- (3.3, -1.2) node[pos=0.4 , below]{$+\frac{1}{m}$};
\draw[black, thick] (1.5, 0) -- (3.3, 0.75);
\draw[black, thick] (4.5, 0) -- (3.3, 0.75);
\draw[black, ultra thick] (4.5, 0) -- (7, 0) node[pos=0.5 , below]{$0$};
\filldraw[black] (4.5,0) circle (1pt) node[anchor=north]{$c_k$};
\filldraw[black] (3.3,0) circle (1pt) node[anchor=north]{$q_k$};
\filldraw[black] (7,0) circle (1pt) node[anchor=north]{$b_{k+1}$};
\draw[black] (7, 0) -- (9.5, -1.6) node[pos=0.4 , below]{$-\frac{1}{n}$};
\draw[black, thick] (7, 0) -- (9.5, 1.2) node[anchor=south]{$f_{2}=\cdots=f_{m+n}$};
\draw[black, thick] (10, 0.8) -- (9.5, 1.2);
\filldraw[black] (9.5,0) circle (1pt) node[anchor=north]{$q_{k+1}$};
\draw[black] (10, -1.1) -- (9.5, -1.6) node[anchor=north]{$f_{1}$} ;

\coordinate [label=left:$+\frac{1}{n(m+n-1)}$] (A) at (3,0.8);
\coordinate [label=left:$+\frac{1}{n(m+n-1)}$] (B) at (7.96,0.8);
\coordinate [label=left:$-\frac{1}{m(m+n-1)}$] (C) at (5.6,0.7);
\coordinate [label=left:$f_1$] (D) at (3.45,-1.55);

\end{tikzpicture}

Figure 1: Sketch of template with slopes

\end{figure}

We claim two statements that when combined almost directly yield the theorem in the version where we assume \eqref{eq:1}. Recall the contraction rates of a template from the last section. 

\begin{lemma}  \label{l2}
Let $\Phi$ satisfy \eqref{eq:2}, \eqref{eq:1} and $T>0$.
    Any matrix $A$ whose induced combined graph 
    $\textbf{h}=\textbf{h}_A$ is $T$
    close to $\textbf{f}$ constructed above belongs to $Bad(\Phi)$. 
\end{lemma}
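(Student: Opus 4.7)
The plan is to verify (i) $A \in W(\Phi)$ and (ii) $A \notin W(\Phi/c)$ for some $c > 1$, which together yield $A \in W(\Phi) \setminus W(\Phi/c) \subseteq Bad(\Phi)$. The central observation is that the trajectory $T_{\textbf{p},\textbf{q}}$ of an integer vector is a V-shape with slopes $-1/n$ and $+1/m$, sharing the slope structure of the $J_k$-troughs of the first template coordinate $f_1$.

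For (i), the $T$-closeness yields $h_1(q_k) \le f_1(q_k) + T$. Since $h_1(q)$ is the infimum of trajectories, there exists an integer vector $(\textbf{p}_k, \textbf{q}_k)$ with $T_{\textbf{p}_k, \textbf{q}_k}(q_k) \le f_1(q_k) + T$. Combined with the matching slope structure and the complementary lower bound $h_1(q_k) \ge f_1(q_k) - T$, an elementary geometric comparison of the two V-shapes forces the trajectory's vertex to lie within $O(T)$ of the template vertex $(q_k, f_1(q_k))$. Reading off the left and right footprints via \eqref{eq:iden} then gives $\Vert \textbf{q}_k \Vert \asymp t_k/\Delta$ and $\Vert A\textbf{q}_k - \textbf{p}_k \Vert \asymp \Phi(t_k)/\Delta$ up to multiplicative factors $e^{O(T)}$. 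Using \eqref{eq:1} to compare $\Phi(\Vert \textbf{q}_k \Vert)$ with $\Phi(t_k)$ and choosing $\Delta$ large enough in terms of $T$ and the constant $d$ from \eqref{eq:1} secures $\Vert A\textbf{q}_k - \textbf{p}_k \Vert \le \Phi(\Vert \textbf{q}_k \Vert)$. The lacunarity \eqref{eq:lacu} of $(t_k)$ supplies infinitely many distinct such $\Phi$-approximations, so $A \in W(\Phi)$.

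For (ii), consider a primitive integer vector $(\textbf{p}, \textbf{q})$ with $t := \Vert \textbf{q} \Vert$ large, write $\psi := \Vert A\textbf{q} - \textbf{p} \Vert$, and let $(q^*, v^*)$ denote the vertex of the corresponding trajectory, so $v^* = (n\log t + m \log \psi)/(m+n)$. From $T_{\textbf{p}, \textbf{q}} \ge h_1 \ge f_1 - T$ we have $v^* \ge f_1(q^*) - T$ unconditionally. If $q^* \in I_k$ for some $k$, then $f_1(q^*) = 0$ gives $v^* \ge -T$, which combined with \eqref{eq:2} yields the uniform bound $\psi \ge \Phi(t)/e^{(m+n)T/m}$. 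If $q^* \in J_{k_0}$ for some $k_0$, there are two subcases. When $(\textbf{p}, \textbf{q}) = \pm(\textbf{p}_{k_0}, \textbf{q}_{k_0})$, the bound $\psi \ge \Phi(t)/(d\Delta e^T)$ follows directly from step (i) and \eqref{eq:1}. Otherwise $(\textbf{p}, \textbf{q})$ is linearly independent of $(\textbf{p}_{k_0}, \textbf{q}_{k_0})$; we invoke the template bound on the second successive minimum, $h_2(q_{k_0}) \ge f_2(q_{k_0}) - T = \vert f_1(q_{k_0}) \vert /(m+n-1) - T$, together with the fact that $T_{\textbf{p}_{k_0}, \textbf{q}_{k_0}}(q_{k_0})$ is very negative, to force $T_{\textbf{p}, \textbf{q}}(q_{k_0}) \ge \vert f_1(q_{k_0}) \vert /(m+n-1) - T$. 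Unfolding the max in $T_{\textbf{p},\textbf{q}}(q_{k_0}) = \max(\log t - q_{k_0}/n,\ \log\psi + q_{k_0}/m)$, either $\log t$ or $\log \psi$ is bounded below substantially; in either branch, combining with the simpler template bound $\psi \ge \Phi(t_{k_0})/(\Delta e^T)$ together with \eqref{eq:1} and \eqref{eq:2} to move between $\Phi(t)$ and $\Phi(t_{k_0})$, one derives $\psi \ge \Phi(t)/C$ for a uniform $C$. Non-primitive vectors either fail to be $\Phi$-approximations or reduce to the primitive case via \eqref{eq:1}, so they contribute only finitely many $\Phi/c$-approximations. Taking $c$ above the maximum of the constants produced in each subcase, $\psi > \Phi(t)/c$ holds for all but finitely many $(\textbf{p}, \textbf{q})$, so $A \notin W(\Phi/c)$.

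The hard part will be the final subcase, where one must verify algebraically that the $h_2$-derived bound, combined with \eqref{eq:1} and \eqref{eq:2}, really yields a constant $C$ independent of $k_0$. This is where the lacunarity \eqref{eq:lacu} and the regularity hypothesis \eqref{eq:1} enter essentially: without them, a trajectory whose vertex is deep inside $J_{k_0}$ but far from $q_{k_0}$ could in principle correspond to an integer vector with $\psi/\Phi(t)$ arbitrarily small, obstructing a uniform choice of $c$.
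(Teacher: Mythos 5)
Your proposal is correct and follows essentially the same strategy as the paper: the paper also shows (i) the local minima of $h_1$ near the template vertices $(q_k, f_1(q_k))$ yield integer vectors $(\textbf{p}_k,\textbf{q}_k)$ with $\Vert\textbf{q}_k\Vert \asymp t_k/\Delta$ and $\Vert A\textbf{q}_k-\textbf{p}_k\Vert \asymp \Phi(t_k)/\Delta$ so that \eqref{eq:1} and a large $\Delta$ gives $A\in W(\Phi)$, and (ii) any other integer vector has a trajectory whose vertex lies at height $\gg -1$, which via \eqref{eq:2} rules out infinitely many $\Phi/c$-approximations. The paper isolates the vertex analysis into a standalone Lemma~\ref{dl}, while you carry out the corresponding case analysis inline; both arguments ultimately rest on the same key observation that $h_2\ge f_2 - T\ge -T$ prevents $h_1$ from having deep local minima other than the $(r_k,h_1(r_k))$'s. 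The one place your wording is slightly looser than the paper's is the treatment of non-primitive vectors, which you dispose of in a single sentence; the paper avoids this issue altogether by phrasing the exclusion in terms of Lemma~\ref{dl}'s uniform lower bound $h_1(r)\ge -C$ for all other local minima, which then combines directly with \eqref{eq:2} via $\Vert A\textbf{q}-\textbf{p}\Vert\gg_C\Vert\textbf{q}\Vert^{-n/m}\ge\Phi(\Vert\textbf{q}\Vert)$ regardless of primitivity.
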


\begin{lemma}  \label{l1}
    The template $\textbf{f}$ above has lower and upper average 
    contraction rate
    \[
    \underline{\delta}(\textbf{f})=(n-1)m+\frac{m+n}{1+\tau},\qquad \underline{\delta}(\textbf{f})=mn,
    \]
    thus via Theorem~\ref{dfsu02} induces
    a set of matrices with combined graph $O(1)$ 
    close to it
    with the desired Hausdorff and packing dimensions.
\end{lemma}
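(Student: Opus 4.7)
The plan is to compute $\underline\delta(\mathbf{f})$ and $\overline\delta(\mathbf{f})$ by integrating the local DFSU contraction rate $\delta(q)$ of our piecewise linear template, and then exploiting both the cyclic structure (alternating flat intervals $I_k$ with dips $J_k$) and the lacunarity of $(t_k)$. First I would identify the three slope vectors appearing in $\mathbf{f}$: the zero vector on the flat intervals $I_k$, the vector $(-1/n, 1/[n(m+n-1)], \ldots, 1/[n(m+n-1)])$ on the descending portion of each $J_k$, and $(1/m, -1/[m(m+n-1)], \ldots)$ on the ascending portion. Reading these off from the DFSU contraction rate formula gives, respectively, $\delta_{\mathrm{flat}}=mn$, $\delta_{\mathrm{desc}}=m(n-1)$, and $\delta_{\mathrm{asc}}=mn$; each is $\tau$-independent.

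Next I would translate \eqref{eq:iden}--\eqref{eq:preis} into asymptotic lengths as $k\to\infty$:
\[
b_k \sim n\log t_k,\qquad q_k\sim \tfrac{mn(1+\tau)}{m+n}\log t_k,\qquad c_k \sim m\tau\log t_k,
\]
so that $|I_k|\sim n\log t_k$, $|\mathrm{desc}_k|\sim\tfrac{n(m\tau-n)}{m+n}\log t_k$, and $|\mathrm{asc}_k|\sim \tfrac{m(m\tau-n)}{m+n}\log t_k$. Setting $F(Q):=\int_0^Q\delta(q)\,dq$, the lacunarity \eqref{eq:lacu} renders contributions from cycles $l<k$ negligible relative to cycle $k$, and direct substitution yields $F(b_k)/b_k\to mn$ together with the key identity
\[
\frac{F(q_k)}{q_k}\ \longrightarrow\ (n-1)m+\frac{m+n}{1+\tau}.
\]

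To finish I would observe the monotonicity structure of $F/Q$ on each cycle. Since $\delta_{\mathrm{desc}}<mn\approx F(b_k)/b_k$, the ratio $F/Q$ strictly decreases on $[b_k,q_k]$, while $\delta_{\mathrm{asc}}=mn\ge F(q_k)/q_k$ forces it to be nondecreasing on $[q_k,c_k]$ and on $[c_k,b_{k+1}]$. Hence the minimum of $F/Q$ on each cycle is attained at $Q=q_k$, giving $\underline\delta(\mathbf{f})=(n-1)m+(m+n)/(1+\tau)$, while the maximum is at $Q=b_{k+1}$, and the ratio $b_{k+1}/c_k\to\infty$ from \eqref{eq:lacu} forces $F(b_{k+1})/b_{k+1}\to mn$, whence $\overline\delta(\mathbf{f})=mn$. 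Theorem~\ref{dfsu02} then produces the stated Hausdorff and packing dimension bounds.

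The hard part will be the DFSU input in the first step — verifying $\delta_{\mathrm{desc}}=m(n-1)$ and especially $\delta_{\mathrm{asc}}=mn$ from the explicit slope vectors. The identification $\delta_{\mathrm{asc}}=mn$ is delicate and essential: any smaller value would cause $F/Q$ to decrease below $F(q_k)/q_k$ on $[q_k,c_k]$, displacing the cycle minimum to $c_k$ and producing a different, $\tau$-dependent liminf that would not match the target formula $(n-1)m+(m+n)/(1+\tau)$.
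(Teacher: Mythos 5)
Your proposal is correct and takes essentially the same approach as the paper. The three local contraction rates $\delta_{\mathrm{flat}}=mn$, $\delta_{\mathrm{desc}}=m(n-1)$, $\delta_{\mathrm{asc}}=mn$ are exactly what the paper establishes in Proposition~\ref{preprep} (you correctly flag that extracting these from Definition~4.5 of~\cite{dfsu} is the one step needing genuine verification), the asymptotics $b_k\sim n\log t_k$, $q_k\sim\tfrac{mn(1+\tau)}{m+n}\log t_k$, $c_k\sim m\tau\log t_k$ agree with the paper's \eqref{eq:frant} and subsequent computations, and your limit $F(q_k)/q_k\to(n-1)m+(m+n)/(1+\tau)$ is the paper's calculation of $u_k=b_k/q_k$ in disguise. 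Your explicit monotonicity argument for $F/Q$ on each cycle (decreasing on $[b_k,q_k]$ because $\delta<F/Q$ there, increasing afterwards because $\delta=mn\ge F/Q$) is a slightly more detailed justification of the paper's terser \eqref{eq:expr}, and is sound.
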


The latter lemma is again proved very similarly to~\cite[Theorem~3.17]{dfsu}
so there is not much originality to it, the proof of the former lemma is however more involved.
We prove the lemmas below.

\subsection{Proof of Lemma~\ref{l2} }

The claim follows from the next geometrically very intuitive observation,
whose exact proof however turns out rather lengthy.

\begin{lemma} \label{dl}
Let $\textbf{f}\in \mathcal{T}_{m,n}$ be the template constructed in~\S\ref{3.1}.
Let $T>0$ be a parameter and $\textbf{h}$ be the combined graph associated to some $A\in \mathbb{R}^{m\times n}$ that is
$T$ close to $\textbf{f}$. Then there exists an absolute constant $C=C(T)>0$, 
independent of $\Delta$ from \eqref{eq:iden}, so that 
\begin{itemize}
    \item for each integer $k\ge k_0$, there is a unique local minimum $(r_k,h_1(r_k))$ 
of $h_1$ that is $C$ close to the local minimum $(q_k,f_1(q_k))$ of $f_1$, i.e.
$r_k\in [q_k-C, q_k+C]$ and 
$h_1(r_k)\in [f_1(q_k)-C, f_1(q_k)+C]$.
\item any other local minimum $(r,h_1(r))$ of $h_1$ satisfies
$h_1(r)\ge -C$.
\end{itemize}

\end{lemma}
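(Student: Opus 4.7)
The plan is to exploit two ingredients: first, the closeness bound $|h_j(q)-f_j(q)|\le T$ for all $q>0$ and $j\in\{1,\dots,m+n\}$; second, the fact that $h_1$ is continuous piecewise linear with slopes drawn only from $\{-1/n,+1/m\}$. Recall that on each flat interval $I_k$ the template satisfies $f_1\equiv 0$, while on each gap $J_k=(b_k,c_k)$ it is V-shaped with vertex at $(q_k,-D_k)$, where $D_k:=-f_1(q_k)\ge\log\Delta$. By \eqref{eq:aut} and the definition of $q_k$, $D_k\to\infty$ as $k\to\infty$ whenever $\tau>n/m$ (the borderline case $\tau=n/m$ is trivial for the main result).

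For existence of $r_k$, I would start from $h_1(q_k)\le f_1(q_k)+T=-D_k+T$, while at the endpoints $b_k,c_k$ of $J_k$ one has $h_1\ge f_1-T=-T$. Since $h_1$ has slopes of absolute value at most $1/\min(m,n)$, the restriction $h_1|_{J_k}$ must strictly decrease over a long subinterval and then ascend to recover height $\ge -T$ at $c_k$. Taking $r_k\in J_k$ to be a point at which $h_1|_{J_k}$ attains its minimum then yields a local minimum of $h_1$ with $h_1(r_k)\in[-D_k-T,-D_k+T]$; matching the descent/ascent slopes of $h_1$ and $f_1$ further bounds $|r_k-q_k|$ by some constant $C_1=C_1(T)$.

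For uniqueness, I would argue by contradiction: suppose $r_k'<r_k''$ are both local minima of $h_1$ lying in the $C$-ball around $(q_k,-D_k)$ in $\mathbb{R}^2$. Between them sits a local maximum $r_k^*$, and since consecutive extrema of $h_1$ must alternate, $h_1$ is linear with slope $+1/m$ on $(r_k',r_k^*)$ and with slope $-1/n$ on $(r_k^*,r_k'')$. A case analysis on the position of $q_k$ within the triple $(r_k',r_k^*,r_k'')$ reveals that, on every subinterval where the slope of $h_1$ opposes that of $f_1$, the bound $|f_1-h_1|\le T$ forces the length of that subinterval to be at most $2Tmn/(m+n)$. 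Chaining these estimates through the subcases confines both candidate minima to a common window of width depending only on $T$; within such a window the rigidity of $f_1$ at its V-vertex $q_k$ is incompatible with two local minima of $h_1$ at depth within $C$ of $-D_k$, once $C=C(T)$ is chosen slightly larger than $T$.

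For the second bullet, if $r$ is a local minimum of $h_1$ with $h_1(r)<-C$ for $C$ large enough relative to $T$, then $f_1(r)\le h_1(r)+T<-C+T$, which forces $r$ to lie inside some gap $J_k$ near its vertex $q_k$. The uniqueness argument from the previous paragraph then identifies $r$ with the distinguished local minimum $r_k$ attached to that $J_k$, proving the contrapositive. The main obstacle I anticipate is executing the case analysis in the uniqueness step cleanly for arbitrary $m,n$: whenever the slopes of $h_1$ and $f_1$ already coincide on a subinterval, the $T$-closeness provides no direct length bound on that piece, so the argument must lean on the global V-vertex geometry of $f_1$ rather than on purely local slope matching.
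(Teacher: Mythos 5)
Your existence step is fine and essentially the paper's. But there is a genuine gap in the uniqueness step (and hence in the second bullet, which you reduce to it), and you in fact acknowledge it yourself: ``whenever the slopes of $h_1$ and $f_1$ already coincide on a subinterval, the $T$-closeness provides no direct length bound on that piece.'' This is not a technical annoyance to be handled by a more careful case analysis; it is the crux, and slope-matching against $f_1$ alone cannot close it. Consider two candidate minima $r_k'<r_k^*<r_k''<q_k$ with a local maximum $r_k^*$ between them: on $(r_k^*,r_k'')$ both $h_1$ and $f_1$ have slope $-1/n$, so $h_1-f_1$ is constant there and the closeness bound says nothing about the length of that piece; one can then arrange a small wiggle producing two local minima of $h_1$, both at depth within $O(T)$ of $-D_k$, without ever violating $|h_1-f_1|\le T$. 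Your phrase ``the rigidity of $f_1$ at its V-vertex is incompatible with two local minima of $h_1$'' is an appeal to intuition, not an argument, and it is false if you only look at $f_1$.

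The missing idea, and the one the paper uses, is to bring in the \emph{second} component of the template and of the combined graph. By construction $f_2(q)=-f_1(q)/(m+n-1)\ge 0$ for all $q$, and $T$-closeness gives $h_2(q)\ge f_2(q)-T\ge -T$ everywhere. Crucially, every local maximum of $h_1$ is simultaneously a local minimum of $h_2$ (consecutive successive-minima functions share their interior corners), so \emph{every} local maximum of $h_1$ lies at height $\ge -T$. This immediately rules out a local maximum $r_k^*$ trapped between two deep minima near $q_k$ once $D_k$ is large: such a point would have $h_1(r_k^*)\le f_1(r_k^*)+T\le -D_k+O_T(1)<-T$, a contradiction. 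With this in hand, the paper observes that on any maximal interval where $h_1<-T$ the function $h_1$ must be a single V (one trajectory), its endpoints must land in the two short windows near $b_k$ and $c_k$, and at most one V can span both windows; the remaining Vs (with both endpoints in the same window) are bounded below by an absolute $-C_1$ via exactly the endpoint slope computation you have in mind. Your proposal never invokes $h_2$ or $f_2\ge 0$, so it cannot reach this conclusion.

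A secondary point: the disjointness of the two endpoint windows in $[b_k,c_k]$, which your argument also implicitly needs, requires $c_k-b_k\to\infty$, i.e.\ $f_1(q_k)\to-\infty$. The paper checks this by splitting off the degenerate case $\Phi(t_k)\gg t_k^{-n/m}$ along a subsequence, where $Bad(\Phi)$ already contains the classical badly approximable set and the theorem follows from Schmidt's result. You assert $D_k\to\infty$ ``whenever $\tau>n/m$,'' but $\tau=n/m$ is allowed under~\eqref{eq:2} and is not ``trivial'' in the sense you imply; this borderline case needs the explicit reduction the paper performs.
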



\begin{proof}
Recall $f_1$ decays with slope $-1/n$ on intervals $(b_k, q_k)$, increases with slope $1/m$ on $(q_k,c_{k})$ and is $0$ outside these intervals. Since $\sup_{q>0} \vert f_1(q)-h_1(q)\vert \le T$
we obviously have that $h_1(q)<-T$ implies $b_k<q<c_{k}$.
Moreover, since $f_2(q)\ge 0$ everywhere and $\vert f_2(q)-h_2(q)\vert \le T$
on $q\in (0,\infty)$, the graph of $h_2$ has no points
with second coordinate less than $-T$. Since any local 
maximum of $h_1$ is a local minimum of $h_2$, it follows
that there are no local maxima of $h_1$ with function
value smaller than $-T$. 
Hence we have the following: On any interval $[b,c]$ where 
on the interior $(b,c)$ we have $h_1(q)< -T$ 
and at the boundary $h_1(b)=h_1(c)=-T$,
the function $h_1(q)$ must 
be equal to some fixed trajectory $T_{\textbf{p},\textbf{q}}$. This means $h_1(q)$ first
decreases starting from $b$ with slope $-1/n$ up to some local minimum attained at some $r>b$, and then starts
increasing with slope $1/m$ until it reaches $c>r>b$ with $h_1(c)=-T$ again. 
We clearly have $[b,c]\subseteq [b_k,c_{k}]$ for some $k$ as outside $f_1(q)=0$ and thus
$h_1(q)\ge -T$, contradiction to $h_1(q)<-T$ in a right resp. left neighborhood of $b$ resp. $c$. 
More precisely, both $b, c$ must lie in one of the two subintervals (for now this may or may not be the same for $b$ and $c$)
\begin{equation} \label{eq:5}
[b_k, b_k+2nT] , \qquad [c_{k}- 2mT , c_{k}]
\end{equation}
since in the remaining interval $(b_k+2nT,c_{k}-2mT)$ we have
$f_1(q)< -2T$ and thus $h_1(q)<-T$, but $h_1(b)=h_1(c)=-T$.

We may assume the two subintervals in \eqref{eq:5} are disjoint for all large $k$. Indeed, this is in particular true if
\begin{equation}  \label{eq:4}
    \lim_{k\to\infty} f_1(q_k)= -\infty,
\end{equation}
as then by \eqref{eq:preis} we have $c_{k}-b_k=-(m+n)f_1(q_k)\to\infty$.
However \eqref{eq:4}
is in turn always true if $t_k^{n/m}\Phi(t_k)\to 0$
as $k\to\infty$, as can be checked via \eqref{eq:iden}. Now if otherwise $\Phi(t_k)\gg t_k^{-n/m}$ on a subsequence of $k$, then
by \eqref{eq:2} it is easily seen that $Bad(\Phi)\supseteq Bad(t\to t^{-n/m})=Bad$ contains the usual set of badly approximable real $m\times n$ matrices (in fact our assumptions
on $\Phi$ imply identity). Then
the claim of Theorem \ref{thm1}
follows from the well-known result by Schmidt~\cite{schmidt} (or alternatively derived from the variational principle) that 
the set of badly approximable matrices has full Hausdorff
dimension $mn$.

From the disjointness, it is obvious from the above description of $h_1$ on $[b, c]$ that for each $k$ there can be at most one pair $b,c$ as above with the property
\begin{equation}  \label{eq:111}
b_k \le b \le b_k+2nT , \qquad c_{k}- 2mT \le  c \le c_{k}.
\end{equation}
We consider $k$ fixed now and 
we write $r_k$ for the local minimum 
position $r$, which indeed will be the value stated
in Lemma~\ref{dl}.
In particular
$\vert b-b_k\vert \ll_{T,m,n} 1$ and $\vert c-c_{k}\vert\ll_{T,m,n} 1$ uniformly in $k$.

For all other pairs where either both $b,c$ are in 
$[b_k, b_k+2nT]$ or both are in $[c_{k}- 2mT , c_{k}]$,
since the slope of $h_1$ is bounded between $-1/n$ and $1/m$ and $h_1(b)=h_1(c)=-T$,
it is clear that $h_1$ is uniformly bounded from below 
on $[b,c]$ in terms of $m,n,T$ only. In fact 
from intersecting the line containing $(b,-T)$ with slope
$-1/n$ with the line containing $(c,-T)$ with slope $1/m$, recalling at $r=r_k$ is the local minimum of $h_1$ within $[b,c]$, we get the bound
\[
\min_{q\in [b,c]} h_1(q) = h_1(r) = -T-\frac{c-b}{m+n}\ge -T- \frac{2T\max\{m,n\}}{m+n}=:-C_1.
\]
Now we consider the potential pair as in \eqref{eq:111}.
In fact this pair must exist for any large $k$ 
by \eqref{eq:4}. Indeed, for large $k$ we have 
$f_1(q_k)< \min\{ -2T, -C_1-T\}$ hence
$h_1(q_k)\le f_1(q_k)+T< \min\{ -T, -C_1\}$ 
and we observed that only for pairs in \eqref{eq:111} this may happen, for large $k$.
Now for this pair in \eqref{eq:111}
we have
\[
\min_{q\in [b,c]} h_1(q) = h_1(r_k) = -T-\frac{c-b}{m+n}= -T- \frac{c_{k}-b_k}{m+n} + \frac{E}{m+n}
\]
with $0\le E \le 2(m+n)T$,
so by \eqref{eq:preis} we conclude
\[
\min_{q\in [b,c]} h_1(q) = h_1(r_k) \in [- \frac{c_{k}-b_k}{m+n} -  T, - \frac{c_{k}-b_k}{m+n} +  T]= [f_1(q_k) - T, f_1(q_k)+T].
\]
In other words 
\begin{equation}   \label{eq:rach}
    | h_1(r_k) - f_1(q_k)| \le T.
\end{equation}
Moreover, $r_k$ can be determined via solution
of $-T-(r_k-b)/n=-T-(c-r_k)/m$ as 
\[
r_k= \frac{ bm+cn }{m+n}.
\]
Similarly 
\[
q_k= \frac{ b_km+c_{k}n }{m+n}.
\]
Hence by \eqref{eq:111} we get
\[
\vert r_k-q_k\vert \le \frac{4mnT}{m+n}=: C_2 \ll_{m,n} T.
\]
So together with \eqref{eq:rach}, indeed the points $(q_k, f_1(q_k))$ and
$(r_k,h_1(r_k))$ are at most $C_3:=\max\{ C_2, T\} \ll_{m,n} T$ apart with respect to maximum norm on $\mathbb{R}^2$, proving the first
claim of the lemma. The second is obvious
as from the arguments above any other local minimum $\tilde{q}\notin \{ r_1,r_2,\ldots\}$ of $h_1(q)$ satisfies $h_1(\tilde{q})\ge \min\{ -T, -C_1\}=-C_1$, so we can put
$C=\max\{ C_1, C_3\}$ to satisfy both claims. Note that $C$ is independent of $\Delta$ as requested.
    \end{proof}

Given $\varepsilon>0$, start with $T=T_0(\varepsilon)$ induced by Theorem~\ref{dfsu02}.
With $C=C(T)$ from Lemma~\ref{dl},
take the square $[q_k-C, q_k+C]\times [f_1(q_k)-C, f_1(q_k)+C]$ and let $X_k\subseteq \mathbb{R}$ be the interval
obtained from its projection to the second axis
along the line with slope $-1/n$ and $Y_k\subseteq \mathbb{R}$
the interval induced by its projection to the second axis along the line with slope $1/m$. This means
we identify $X_k, Y_k$ with real intervals by dropping
the first coordinate $0$.
Then by the formula of trajectories \eqref{eq:traje}
and as at the minimum the two component functions
within the maximum coincide,
the integer vectors $(\textbf{p}_k, \textbf{q}_k)\in \mathbb{Z}^{m}\times (\mathbb{Z}^{n}\setminus \{\textbf{0}\})$ inducing $(r_k,h_1(r_k))$, depending on $\textbf{h}$ and thus the choice of $A$,
satisfy
\[
\log \Vert \textbf{q}_k\Vert \in X_k, \qquad 
\log \Vert A\textbf{q}_k-\textbf{p}_k\Vert \in Y_k. 
\]
Since it is easy to see that $X_k\subseteq [f_1(q_k)+q_k/n-D, f_1(q_k)+q_k/n+D]$
and 
$Y_k\subseteq [f_1(q_k)-q_k/m-D, f_1(q_k)-q_k/m+D]$ for some absolute $D=D(C)>0$ independent of $k$ and $\Delta$, taking exponential map we see that uniformly in $k$
\begin{equation} \label{eq:NEU}
\Vert \textbf{q}_k\Vert \asymp_{D} e^{f_1(q_k)+q_k/n}= t_k, \qquad 
\Vert A\textbf{q}_k-\textbf{p}_k\Vert \asymp_{D} e^{f_1(q_k)-q_k/m} =\frac{\Phi(t_k)}{\Delta}.
\end{equation}
For the identities we have used \eqref{eq:iden}, and the
latter requires a short calculation.
Recall $T$ is fixed
and so are $C,D$, independent of $\Delta$. Hence, by property 
\eqref{eq:1}, choosing $\Delta$ large enough we can guarantee that $\Phi(t_k)/\Delta<K^{-1}\Phi(\Vert \textbf{q}_k\Vert)$ for all $k\ge 1$, 
where $K=K(D)=K(T)$
is the implied upper constant in the right formula of \eqref{eq:NEU}. 
Thus
\begin{equation} \label{eq:NN}
\Phi(\Vert \textbf{q}_k\Vert)\ll_{D}  \Vert A\textbf{q}_k-\textbf{p}_k\Vert < 
K \frac{\Phi(t_k)}{\Delta} <
K\cdot (K^{-1}\Phi(\Vert \textbf{q}_k\Vert))= \Phi(\Vert \textbf{q}_k\Vert). 
\end{equation}
Again since $C$ and thus $D$ is fixed once $T$ is chosen, we may deduce that 
\[
\Phi(\Vert \textbf{q}_k\Vert)\ll_{T}  \Vert A\textbf{q}_k-\textbf{p}_k\Vert < \Phi(\Vert \textbf{q}_k\Vert). 
\]
The right estimate shows that any $A$ whose combined graph is $T$ close to
the template $\textbf{f}$ lies in $W(\Phi)$. 
On the other hand, the left estimate shows that the $(\textbf{p}_k, \textbf{q}_k)\in \mathbb{Z}^{m}\times \mathbb{Z}^{n}$ inducing $(r_k,h_1(r_k))$ do not induce considerably better approximations, 
so for $A$ to lie in all $W(\Phi/c)$, $c>1$ we may 
restrict to considering other integer vectors $(\textbf{p}, \textbf{q})\in \mathbb{Z}^{m}\times \mathbb{Z}^{n}$.
However, by Lemma~\ref{dl} any $(\textbf{p}, \textbf{q})\in \mathbb{Z}^{m}\times (\mathbb{Z}^{n}\setminus \{\textbf{0}\})$
not of the form $(\textbf{p}_k, \textbf{q}_k)$ induces a local minimum point $(r,h_1(r))$ with $h_1(r)\ge -C$, which by a short calculation
and \eqref{eq:2} means
\[
\Vert A\textbf{q}-\textbf{p}\Vert \gg_{C} \Vert \textbf{q}\Vert^{-n/m} \ge \Phi(\Vert \textbf{q}\Vert),
\]
and again since $C$ is fixed once $T$ is chosen also
\[
\Vert A\textbf{q}-\textbf{p}\Vert \gg_{T}  \Phi(\Vert \textbf{q}\Vert).
\]
This argument shows that any $A$ with combined graph $T$ close 
to $\textbf{f}$ cannot lie in all $W(\Phi/c)$, $c>1$.
Combining our observations, indeed any such $A$ belongs to $Bad(\Phi)$.

\subsection{Proof of Lemma~\ref{l1} }

As announced, our proof is very similar
to~\cite[Theorem~3.17]{dfsu}.
We use the notation 
in~\cite[\S~4.1]{dfsu}.
Following~\cite[Definition 4.5]{dfsu},
the local contraction rate of a template $\textbf{f}$ 
at $q>0$ equals the cardinality of the set
\begin{equation}  \label{eq:ii}
    \{ (i_{+},i_{-}): i_{+}\in S_{+}, i_{-}\in S_{-},\;\; i_{+}<i_{-}\}, 
\end{equation}
where $S_{-}, S_{+}$ depend on $q$ and
form a partition of $\{1,2,\ldots,m+n\}$, we omit stating the full intricate precise definition from~\cite[Definition 4.5]{dfsu}.
In the next proposition we determine the contraction rates on $(0,\infty)$. Denote $Y_{\mathbb{Z}}= Y\cap \mathbb{Z}$
for a set $Y\subseteq \mathbb{R}$.

\begin{proposition}  \label{preprep}
   The local contraction rate of the template constructed in~\S~\ref{3.1}
   is given as
$$
\delta(q)= \begin{cases}
mn, \qquad\qquad \text{if } \; q\notin \cup (b_k, q_k)  \\
mn-m, \qquad \text{if } \;  q\in \cup (b_k, q_k).
\end{cases}
$$
\end{proposition}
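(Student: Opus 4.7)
The plan is to apply the definition of the local contraction rate from~\cite[Definition~4.5]{dfsu} on each maximal interval where the template $\textbf{f}$ constructed in~\S\ref{3.1} is affine. By construction these intervals are of three types: the plateaus $I_k = [a_k, b_k]$, on which all $f_j$ vanish identically; the descending pieces $(b_k, q_k)$, on which $f_1' = -1/n$ and $f_j' = +1/(n(m+n-1))$ for $j \ge 2$; and the ascending pieces $(q_k, c_k)$, on which $f_1' = +1/m$ and $f_j' = -1/(m(m+n-1))$ for $j \ge 2$. On each piece I would read off the slope vector, extract the partition $\{1,\ldots,m+n\} = S_+ \sqcup S_-$ as prescribed in~\cite[Definition~4.5]{dfsu}, and then count the admissible pairs from~\eqref{eq:ii} directly.

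On a plateau $I_k$ the partition is the ``default'' one, $S_+ = \{1,\ldots,n\}$ and $S_- = \{n+1,\ldots,m+n\}$, which yields $\#\{(i_+,i_-) : i_+ < i_-\} = mn$. On an ascending piece $(q_k, c_k)$ the recipe again produces a partition with admissible pair count $mn$: the positive extremal slope $+1/m$ of $f_1$ corresponds to the unconstrained ``error'' phase $\log \Vert A\textbf{q}_k - \textbf{p}_k\Vert + q/m$ of a best approximation vector, which imposes no rigid Diophantine constraint on $A$ and hence does not reduce the local rate. On the descending piece $(b_k, q_k)$ the extremal negative slope $-1/n$ of $f_1$ instead reflects the size constraint $\log \Vert \textbf{q}_k\Vert - q/n$ of an actual best approximation; the DFSU partition shifts accordingly, and a direct count then yields $mn - m$ pairs.

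The main obstacle is correctly applying~\cite[Definition~4.5]{dfsu} in the presence of the intermediate slopes $\pm 1/(n(m+n-1))$ and $\mp 1/(m(m+n-1))$ that appear on the non-plateau pieces, where the partition is governed by a tie-breaking rule rather than by the signs of the slopes alone. To navigate this I would adapt, essentially verbatim, the combinatorial step in the proof of~\cite[Theorem~3.17]{dfsu}, which deals with an analogous template; once the partition is pinned down on each of the three piece types, the two values $mn$ and $mn - m$ drop out immediately and the proposition follows.
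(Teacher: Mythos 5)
Your high-level strategy matches the paper's: read off the slope vector on each of the three types of affine piece (plateaus $I_k$, descending $(b_k,q_k)$, ascending $(q_k,c_k)$), apply~\cite[Definition~4.5]{dfsu} to extract the partition $S_+\sqcup S_-$, and count pairs $(i_+,i_-)$ with $i_+<i_-$. However, there is a real gap: you only carry out the computation in the plateau case, where all slopes vanish and the answer is essentially forced. For the two non-trivial pieces you substitute heuristics (``the error phase imposes no rigid Diophantine constraint'', ``the partition shifts accordingly'') for the actual combinatorics, and then defer the whole remaining burden to adapting~\cite[Theorem~3.17]{dfsu} ``essentially verbatim.'' Those two cases are the entire content of the proposition. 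The definition of $\delta(q)$ in DFSU is purely combinatorial in the slope vector $\textbf{f}'(q)$ — Diophantine interpretations such as ``which phase of a best approximation is active'' play no role in computing it — so your ascending-piece argument does not actually establish $\delta(q)=mn$ there. What one must do, and what the paper does, is exhibit the intervals of equivalence explicitly: on $(b_k,q_k)$ they are $\{1\}$ and $\{2,\ldots,m+n\}$, giving $S_-=\{1,m+2,\ldots,m+n\}$, $S_+=\{2,\ldots,m+1\}$ and hence count $mn-m$; on $(q_k,c_k)$ they are $\{1,\ldots,m+n-1\}$ and $\{m+n\}$, yet the induced partition is again $S_-=\{m+1,\ldots,m+n\}$, $S_+=\{1,\ldots,m\}$ with count $mn$.

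One further warning sign: in the plateau case you give $S_+=\{1,\ldots,n\}$, $S_-=\{n+1,\ldots,m+n\}$, whereas the paper's application of~\cite[Definition~4.5]{dfsu} yields $S_+=\{1,\ldots,m\}$, $S_-=\{m+1,\ldots,m+n\}$. These agree only when $m=n$, and although both happen to produce the same count $mn$ here, having the roles of $m$ and $n$ reversed in the base case suggests a misreading of the DFSU convention that would not stay harmless once you attempt the two cases where the partition is asymmetric. You should pin down the sign/ordering convention in~\cite[Definition~4.5]{dfsu} before attempting the descending and ascending pieces.
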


\begin{proof}
In intervals of the form
$(a_k,b_k)=(c_{k-1}, b_k)$, i.e. the interior of $I_k$, where all $f_j$ vanish,
the only interval of equivalence as 
defined in~\cite[Definition 4.5]{dfsu} 
is $(0,mn]_{\mathbb{Z}}=\{ 1,2,\ldots , m+n\}$ and
$S_{-}=\{m+1,m+2,\ldots,m+n\}$ and $S_{+}=\{1,2,\ldots,m\}$, so the cardinality of
\eqref{eq:ii} is 
$mn$, as large as possible.
On intervals $(b_k,q_{k})$,
the intervals of equivalence are $(0,1]_{\mathbb{Z}}=\{1\}$ 
and $(1,mn]_{\mathbb{Z}}=\{ 2,3,\ldots,m+n\}$ and 
$S_{-}=\{1,m+2,m+3,\ldots,m+n\}$ 
and $S_{+}=\{2,3,\ldots,m+1\}$, so the set 
\eqref{eq:ii}
has cardinality $mn-m$.
Finally, on intervals $(q_k,c_k)$, the intervals
of equivalence are 
$(0,m+n-1]_{\mathbb{Z}}=\{ 1,2,\ldots,m+n-1\}$ and the singleton $(m+n-1,m+n]_{\mathbb{Z}}=\{ m+n\}$,
but then again
$S_{-}=\{m+1,m+2,\ldots,m+n\}$ and 
$S_{+}=\{1,2,\ldots,m\}$. So the cardinality of
\eqref{eq:ii} is again $mn$.
\end{proof}

By the proposition, it is clear that the local minima of the average contraction rate are taken at the values $q_k$, and the local maxima at values $b_k$, so that
\begin{equation}  \label{eq:expr}
\underline{\delta}(\textbf{f})= \liminf_{k\to\infty} \frac{\int_{0}^{q_k} \delta(q)\; dq }{q_k},  \qquad
\overline{\delta}(\textbf{f})= \limsup_{k\to\infty} \
\frac{\int_{0}^{b_k} \delta(q) \; dq }{b_k}.
\end{equation}
From \eqref{eq:aut} and \eqref{eq:iden} if $\tau<\infty$ we see
\begin{equation}  \label{eq:frant}
    q_k = \frac{\tau+1+o(1)}{ m^{-1}+n^{-1} }\cdot \log t_k, \qquad k\to\infty,
\end{equation}
and similarly we can just assume the first factor grows if $\tau=\infty$. In either case \eqref{eq:lacu} implies
\begin{equation} \label{eq:franzl}
    \lim_{k\to\infty} \frac{q_{k+1}}{q_k}=  \infty.
\end{equation}
On the other hand, since by \eqref{eq:traje} and Dirichlet's Theorem obviously $-1/n\le f_1(q_k)/q_k\le 0$, relations 
\eqref{eq:iden}, \eqref{eq:preis} imply $b_k\asymp c_k\asymp q_k$, with implied constants depending on $\Delta$. Combining with \eqref{eq:franzl} yields $b_{k+1}/q_k\to \infty$ or $q_k/b_{k+1}\to 0$
as $k\to\infty$.
Since the local contraction rate equals $mn$ on 
the long interval $(q_k, b_{k+1})$,
it is further easy to see that the right expression
in \eqref{eq:expr} is $mn$, indeed 
\[
\frac{\int_{0}^{b_{k+1}} \delta(q) \; dq }{b_{k+1}} \ge 
\frac{\int_{q_k}^{b_{k+1}} \delta(q) \; dq }{b_{k+1}} = \frac{mn(b_{k+1}-q_k)}{b_{k+1}}= mn(1-o(1)), \qquad k\to\infty,
\]
the reverse inequality being trivial.
Moreover,
by Proposition~\ref{preprep} and \eqref{eq:franzl}
the left expression in \eqref{eq:expr} can be estimated
\begin{align}  \label{eq:insert}
\underline{\delta}(\textbf{f})&\ge \liminf_{k\to\infty} \frac{\int_{q_{k-1}}^{b_k} \delta(q)\; dq + \int_{b_{k}}^{q_k} \delta(q)\; dq }{q_k} \\
&= \liminf_{k\to\infty}\frac{mn(b_k-q_{k-1}) + (mn-m)(q_k-b_k) }{q_k} \nonumber \\
&=mn\cdot (u_k-o(1)) + (mn-m)(1-u_k)  \nonumber \\ 
&=mn(1-o(1))-(1-u_k)m, \nonumber 
\end{align}
as $k\to\infty$, where we have put
\[
u_k:=\frac{b_k}{q_k}.
\]
We determine the limit of $u_k$.
The relations \eqref{eq:iden} and \eqref{eq:frant} imply
\[
f_1(q_k) = \log t_k - q_k/n = q_k\cdot  
\left( \frac{ m^{-1}+n^{-1} }{\tau+1} - \frac{1}{n} + o(1) \right), \quad k\to\infty,
\]
so further by \eqref{eq:preis}, \eqref{eq:frant} 
as $k\to\infty$ we get
\[
b_k = q_k + nf_1(q_k) = q_k \cdot \left(1+ 
\frac{ n/m+1 }{\tau+1} -1 + o(1)\right)= q_k\left( 
\frac{ n/m+1 }{\tau+1} + o(1)\right).
\]
Hence
\[
\lim_{k\to\infty} u_k= \frac{ m+n }{ m(\tau+1) }.
\]
Inserting in \eqref{eq:insert} and a short simplification indeed leads to the claimed lower bound. As observed below Theorem~\ref{thm1}, the reverse upper bound is well-known.

\subsection{Conclusion}

Given $\varepsilon>0$, we have shown
that for large $T$ the set of matrices $T$ close to the constructed template
have Hausdorff and packing dimension at least $\underline{\delta}(\textbf{f})-\varepsilon$ and $\overline{\delta}(\textbf{f})-\varepsilon$, respectively. However,
since the latter values are the predicted dimension of Theorem~\ref{thm1} 
and $\varepsilon$
can be arbitrarily small, the claimed lower bounds follow.

\section{ Proof of Theorem~\ref{thm1} for monotonic $\Phi$}

\subsection{The case $\tau<\infty$}

Now drop the assumption \eqref{eq:1}
and assume instead $\Phi$ is decreasing and $\tau<\infty$.
We again use the construction of~\S~\ref{3.1} but 
will have to specify the involved lacunary sequence $t_k$.
Analyzing the proof of Lemma~\ref{l2}, the only point where we needed
\eqref{eq:1} was to conclude that for the constructed $\textbf{p}_k, \textbf{q}_k$ with 
\[
\Vert \textbf{q}_k\Vert \asymp t_k, \qquad 
\Vert A \textbf{q}_k-\textbf{p}_k \Vert\asymp \Phi(t_k)
\]
for any $\textbf{q}^{\prime}$ of comparable norm as $\textbf{q}_k$, we also have
\[
    \Phi(\Vert \textbf{q}^{\prime}\Vert) \asymp \Phi( \Vert\textbf{q}\Vert).
\]
So to derive the same result for any 
non-increasing approximation function
$\Phi$ with property \eqref{eq:2},
the key is again to prove existence of a sequence $t_k$ where
this holds for similarly derived $\textbf{p}_k, \textbf{q}_k$. This results in
verifying the following rather elementary
lemma on real functions, whose proof however is not as straightforward as it may appear at first sight.

\begin{lemma}  \label{llama}
    Let $\Phi$ be decreasing and inducing $\tau<\infty$, and $c>1$ be fixed. Then there exists
    a sequence of integers $t_k$ satisfying \eqref{eq:aut} and \eqref{eq:lacu} 
    and so that additionally \eqref{eq:1} holds for any $t=t_k$, i.e. there is an explicitly computable fixed $d=d(c,\tau)>1$ independent of $k$ so that
    \begin{equation}  \label{eq:Pr}
    d^{-1}\Phi(t_k)\le \Phi(\tilde{t})\le d\Phi(t_k)
    \end{equation}
    holds for any $k$ and any integer $\tilde{t}$ in the range
    \[
    c^{-1}t_k  \le \tilde{t} \le c t_k.
    \]  
\end{lemma}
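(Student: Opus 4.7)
The plan is an averaging/pigeonhole argument on the non-decreasing function $\psi(t):=-\log\Phi(t)$, which by hypothesis satisfies $\liminf_{t\to\infty}F(t)=\tau$ with $F(t):=\psi(t)/\log t$. Throughout we write $L:=\log c$.

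First I would select a very rapidly growing sequence $(S_k)$ of integers with $F(S_k)\to\tau$, which exists by definition of $\liminf$, and refine it so $\log S_k/\log S_{k-1}\to\infty$. Set $\delta_k:=F(S_k)-\tau\to 0$. For each $k$, partition the logarithmic window $[\log S_{k-1},\log S_k]$ into $N_k$ consecutive sub-intervals of length $2L$, corresponding to multiplicative ranges $[A_i,c^2A_i]$ with $A_i:=S_{k-1}c^{2i}$, $i=0,\ldots,N_k-1$. Super-lacunarity gives $N_k\to\infty$ and $\log S_{k-1}=o(\log S_k)$.

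Write $g_i:=\psi(c^2A_i)-\psi(A_i)\ge 0$, so $\sum_i g_i=\psi(S_k)-\psi(S_{k-1})$. Combining the upper bound $\psi(S_k)\le(\tau+\delta_k)\log S_k$ with the uniform lower bound $\psi(S_{k-1})\ge(\tau-\epsilon_{k-1})\log S_{k-1}$ produced by $\liminf F=\tau$ (with $\epsilon_{k-1}\to 0$), and absorbing the two error terms $\delta_k\log S_k=o(\log S_k)$ and $\epsilon_{k-1}\log S_{k-1}=o(\log S_k)$ using super-lacunarity, one obtains $\sum_i g_i\le 2\tau L\cdot N_k\cdot(1+o(1))$. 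A Markov-type pigeonhole then guarantees that, for any fixed large constant $C$ (say $C=200\tau L$), at most a small fixed fraction of the sub-intervals have $g_i>C$, so at least a $99/100$-fraction are \emph{good} in the sense $g_i\le C$. Moreover, good sub-intervals cannot all be concentrated at the bottom: at least a positive proportion sit in the top portion of indices $i\ge(1-\eta_k)N_k$ for an appropriately chosen $\eta_k\to 0$ depending on $k$.

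I would then let $t_k$ be an integer near the middle of such a good sub-interval $[A_i,c^2A_i]$ with $i\ge(1-\eta_k)N_k$. Since the $c$-neighborhood $[c^{-1}t_k,ct_k]$ lies in (a slight enlargement of) the sub-interval and $\psi$ is monotone, for any integer $\tilde{t}$ in it we have $|\psi(\tilde{t})-\psi(t_k)|\le g_i\le C$; this gives \eqref{eq:1} with $d=e^{C}=c^{200\tau}$ depending only on $c$ and $\tau$. The inequality $\log t_k\ge(1-\eta_k)\log S_k$ combined with $\psi(t_k)\le\psi(S_k)\le(\tau+\delta_k)\log S_k$ yields $F(t_k)\le(\tau+\delta_k)/(1-\eta_k)\to\tau$, matching the lower bound $F(t_k)\ge\tau-o(1)$ from $\liminf F=\tau$; the same inequality transfers super-lacunarity of $(S_k)$ to $(t_k)$.

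The main obstacle is the two-level bookkeeping between the stability threshold $C$ (which must stay fixed to make $d=d(c,\tau)$) and the ``top-fraction'' parameter $\eta_k$ (which must tend to $0$ to force $F(t_k)\to\tau$). These are compatible only because the super-lacunarity of $(S_k)$ makes the error terms $\delta_k\log S_k$ and $\epsilon_{k-1}\log S_{k-1}$ so small relative to $\log S_k$ that the pigeonhole estimate on the $g_i$ can be sharpened from ``bad fraction $\lesssim 2\tau L/C$'' to ``bad fraction $=o(1)$'' upon restriction to the top portion $i\ge(1-\eta_k)N_k$; this is where the freedom in choosing $(S_k)$ extremely fast-growing is used.
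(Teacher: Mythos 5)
Your averaging/pigeonhole argument is correct in outline and takes a genuinely different route from the paper's. The paper splits into two cases according to whether the \emph{upper} order $\omega := \limsup_{t\to\infty} -\log\Phi(t)/\log t$ strictly exceeds $\tau$ or equals it. In the case $\omega>\tau$ it locates a window $[r_k,s_k]$ on which $F(t):=-\log\Phi(t)/\log t$ drops from slightly above $\tau+\delta_k/2$ to below $\tau+\delta_k/3$, cuts it into multiplicative blocks of ratio $c^2$, and shows that if the oscillation of $\Phi$ exceeded $c^{2\omega}$ on \emph{every} block then $F$ would strictly increase across $[r_k,s_k]$, a contradiction; the case $\omega=\tau$ is handled by a separate, more elementary iteration. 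Your proof avoids this case distinction entirely: you work directly with the non-decreasing function $\psi=-\log\Phi$, use $\sum_i g_i=\psi(S_k)-\psi(S_{k-1})\approx\tau\log S_k$ together with super-lacunarity to bound the \emph{average} oscillation per block by $\approx 2\tau\log c$, and then pigeonhole. Both arguments rely on the same underlying mechanisms (monotonicity of $\Phi$, the $\liminf$ structure of $F$, and choosing $(S_k)$ or $(r_k,s_k)$ extremely fast-growing to absorb the $o(1)$ errors), so neither is materially more powerful; your version is somewhat cleaner in that it handles $\omega=\tau$ and $\omega>\tau$ uniformly and produces the constant $d=c^{O(\tau)}$ by a single Markov bound rather than two separate constructions.

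One small inaccuracy worth flagging: in your closing paragraph you claim the bad fraction in the top portion ``can be sharpened to $o(1)$.'' That is not what your estimate gives. With $C=200\tau L$ fixed (as it must be, so that $d=e^C$ depends only on $c,\tau$), the average of $g_i$ over the top window $i\ge(1-\eta_k)N_k$ tends to $2\tau L$ and not to $0$, so by Markov the bad fraction there is $\le 2\tau L/C + o(1)\approx 1/100$, not $o(1)$. This is however entirely sufficient: you only need a \emph{single} good block in the top range, and $1/100<1$ certainly guarantees one, provided $\eta_k N_k\ge 1$, which super-lacunarity ensures. Also, be a bit careful with the ``slight enlargement'' step: if $t_k$ is an integer near $cA_j$ the window $[c^{-1}t_k,ct_k]$ may protrude into the adjacent (possibly bad) block, so it is cleanest to run the pigeonhole with blocks of logarithmic length $4\log c$ and then take $t_k$ in the middle, which changes only the constant $C$.
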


The proof of the sole implication \eqref{eq:Pr} would be rather simple, however incorporating
the additional condition \eqref{eq:aut}
makes it slightly technical.
In this context, we point out that \eqref{eq:Pr} is in general not true for an arbitrary sequence $t_k$ satisfying \eqref{eq:aut} and \eqref{eq:lacu} when
we only assume $\Phi$ to be decreasing.
Indeed, there may be problems when $\Phi$ decays fast close to $t_k$. Moreover \eqref{eq:Pr} certainly fails  irrespective of $d$ for all large $t_k$ if for example $\Phi(t)=e^{-t}$ inducing $\tau=\infty$, excluded in the lemma. Indeed we need a different argument in this case. We use both the monotonicity of $\Phi$ and $\tau<\infty$ to show
that we are still able to find a suitable sequence $t_k$ avoiding these obstacles.
As a side note we remark that for the sake of Lemma~\ref{llama} only, we need not assume \eqref{eq:2} on $\Phi$.

\begin{proof}
To prove the lemma, complementing $\tau$, we define the upper order of $1/\Phi$ at infinity
\[
\omega:= \limsup_{t\to\infty} -\frac{\log \Phi(t) }{\log t}.
\]
For simplicity let us assume $\omega<\infty$, otherwise some minor twists in the proof below are required.
Keep in mind that the monotonicity of $\Phi$ 
and $\omega<\infty$ easily imply for any $\sigma>1$ we have
\begin{equation}  \label{eq:fritt}
    \limsup_{N\to\infty} -\left(\frac{ \log \Phi(\sigma N) }{\log (\sigma N)}-\frac{ \log \Phi(N)}{\log N}\right) \le 0, 
\end{equation}
in particular
\begin{equation}  \label{eq:nnp}
    \limsup_{N\to\infty} -\left(\frac{ \log \Phi(N+1) }{\log (N+1)}-\frac{ \log \Phi(N)}{\log N}\right) \le 0. 
\end{equation}
Essentially this means that the function $t\mapsto -\log \Phi(t)/\log t$ can only decrease
very slowly for large $t$ (whereas it may increase arbitrarily rapidly).
We distinguish two cases.

Case 1: $\omega>\tau$, that is the upper
order strictly exceeds the lower order of
$1/\Phi$.
Let $\delta_k\in(0,\omega-\tau)$ be a sequence with $\delta_k\to 0$ slowly. Fix $k$ for the moment. By choice of $\delta_k$ and definition of $\omega$,
there exists an (arbitrarily large) integer $\ell=\ell_k$ satisfying $-\log \Phi(\ell_k)/\log \ell_k > \tau+\delta_k$. 
Hence, by \eqref{eq:nnp}, if $\ell_k$ was chosen large enough,
then by definition of $\tau$
there is some integer $r_k>\ell_k$ with $-\log \Phi(r_k)/\log r_k$ between
$\tau+\delta_k/2$ and $\tau+\delta_k$.
Choose $r_k$ minimal with this property.
Again by definition of $\tau$
at some point $s_k>r_k$ we have $-\log \Phi(s_k)/\log s_k < \tau+\delta_k/3$.
By \eqref{eq:fritt} 
for some $\sigma\le c^2$,
we may assume $s_k$ is of the form 
$c^{2g}r_k$ for an integer $g=g(k)\ge 1$, otherwise we slightly enlarge it without
affecting the estimates above.
If we again choose $s_k$ minimal with this property and $\ell_k$ was chosen large enough, again by \eqref{eq:fritt}
we can assume conversely $-\log \Phi(s_k)/\log s_k > \tau$.
(Moreover it follows from montonicity of $\Phi$ that
$s_k/r_k\to\infty$ as $k\to\infty$,   however we will not explicitly need this).
So by this argument we find a sequence of integer pairs $r_k, s_k$ with 
\[
r_1<s_1<r_2<s_2<\cdots
\]
and
\begin{equation}  \label{eq:conzu}
\tau<-\frac{\log \Phi(s_k) }{\log s_k} <
\tau+ \frac{\delta_k}{3}< \tau+ \frac{\delta_k}{2} <
-\frac{\log \Phi(r_k) }{\log r_k}<\tau+\delta_k.
\end{equation}
We want to contradict this if the implication \eqref{eq:Pr} fails for
\[
d= c^{2\omega}< \infty.
\]
For this, we split the interval $[r_k,s_k]$ into $g=g(k)\ge 1$ adjacent subintervals of constant ratio $c^2$, i.e. of the form
\[
[r_k,s_k]= [r_k,c^2r_k]\cup [c^2r_k,c^4r_k]\cup \cdots\cup [c^{2(g-1)}r_k,c^{2g}r_k]
\]
Denote the subintervals by $I_i=[a_i, b_i]$,
for $1\le i\le g$, 
omitting $k$ in the notation, and formally let $a_{g+1}:=b_g=s_k$. Note that $b_{i}=a_{i+1}=c^2 a_i$ for $1\le i\le g$ and 
\begin{equation}  \label{eq:aiaj}
  a_{i+1}=c^2 a_i, \qquad 1\le i\le g. 
\end{equation}
Now assume contrary to \eqref{eq:Pr}
that on each subinterval $I_i$ we have 
\begin{equation} \label{eq:asinu}
\frac{ \max_{t\in I_i}  \Phi(t) }{\min_{t\in I_i} \Phi(t) }=\frac{   \Phi(a_i) }{ \Phi(a_{i+1}) }  > d=c^{2\omega}, \qquad 1\le i\le g.
\end{equation}
We may assume $r_1=a_1(1)\le a_1(k)=r_k$ was chosen
large enough that
\begin{equation}  \label{eq:Aj}
    \Phi(t) > t^{-2\omega}, 
    \qquad t\ge r_1.
\end{equation}
Combination of the above estimates
\eqref{eq:aiaj}, \eqref{eq:asinu}, \eqref{eq:Aj} readily implies
\[
-\frac{\log \Phi(r_k) }{\log r_k} =
-\frac{\log \Phi(a_1) }{\log a_1} <
-\frac{\log \Phi(a_2) }{\log a_2}<\cdots <  -\frac{\log \Phi(a_{g+1}) }{\log a_{g+1}} = -\frac{\log \Phi(s_k) }{\log s_k} 
\]
contradiction to \eqref{eq:conzu}. So for each $k$ there must be some interval $I_j$, $j=j(k)$, where the claim \eqref{eq:Pr} holds. Moreover,
by \eqref{eq:conzu} we have
\[
-\frac{\log \Phi(t) }{\log t}\in [\tau,\tau+\delta_k], \qquad t\in I_j
\]
so since $\delta_k\to 0$ this tends to $\tau$. So if we let $t_k=c a_j$, $j=j(k)$ as above such that $[c^{-1}t_k,ct_k]=I_j$, and increase $\ell_k$
and thus $t_k$ fast enough in each step,
then indeed all claimed properties are satisfied. Finally the case $\omega=\infty$ requires just minor modifications in the argument, 
we leave the details to the reader.

Case 2: $\omega=\tau$. Then clearly it suffices to find any lacunary
sequence $t_k$ with 
property \eqref{eq:Pr}.
Start with large integer $\ell_1>0$. If $\Phi(c^2 \ell_1)> c^{-4\tau} \Phi(\ell_1)$ then by monotonicity 
$c^{-4\tau} \Phi(\ell_1) \le \Phi(t)\le \Phi(\ell_1)$ for each $t\in [\ell_1, c^2\ell_1]$, in  other words
\[
c^{-4\tau}\le \frac{\Phi(t)}{\Phi(\ell_1)}\le \frac{\Phi(t)}{\Phi(c\ell_1)} \leq 1
\]
for such $t$. Note that $t$ lies in $[c^{-1}(c\ell_1), c(c\ell_1)]$, hence
with $c$ and $t_1=c\ell_1$ as in the theorem.

Hence
we may put $t_1=c\ell_1$ and are done with first step for implied constant $d=c^{4\tau}$,
and then we choose $\ell_2>\ell_1$
large, inducing $t_2>t_1$ with the same argument. So assume opposite $\Phi(c^2 \ell_1)\le  c^{-4\tau} \Phi(\ell_1)$. Then take $\widetilde{\ell_1}=c^2 \ell_1$ instead. Again by the same argument
we can take $t_1=c\widetilde{\ell_1}=c(c^2\ell_1)=c^3\ell_1$ unless 
if $\Phi(c^2 \widetilde{\ell_1})\le c^{-4\tau} \Phi(\widetilde{\ell_1})$.
We repeat this process and see that the argument to find $t_1$ for the constant $d=c^{4\tau}$ can only fail if for any integer $k\ge 1$
we have $\Phi(c^{2k} \ell_1) \le c^{-4\tau k} \Phi(\ell_1)$. However as $k\to \infty$ this would imply the upper order of $\Phi$ is at least $\omega\ge 4\tau k/(2k)= 2\tau>\tau$, contradiction to our assumption of Case 2.
\end{proof}

With the lemma at hand, it is easy to conclude. By Theorem~\ref{dfsu02}, for any $\varepsilon>0$ and some $T= T_0(\varepsilon)$ large enough we have
uniformly in all templates $\boldsymbol{f}$
\begin{equation}  \label{eq:lhset}
\dim_H(  \{ A: \sup_{q>0}\max_{1\le j\le m+n}|f_j(q)-h_j(q)|    \le T \}) \ge \underline{\delta}(\boldsymbol{f}) - \varepsilon,
\end{equation}
where $h_j=h_{j,A}$ are the component functions of the combined graph associated to $A$.
Now 
starting with decreasing $\Phi$ satisfying \eqref{eq:2} and 
$\tau<\infty$, we may apply Lemma~\ref{llama} for any $c>0$ which equips us with some induced lacunary sequence $t_k=t_k(c)$ from which we construct the template $\textbf{f}=\textbf{f}_c$ as in~\S~\ref{3.1}. 
Now we can copy precisely the proof
of Lemma~\ref{l2} 
for $T$ above, with the sole
twist that we
use Lemma~\ref{llama}
in place of property \eqref{eq:1}
to obtain \eqref{eq:NN}. Since 
any $A$ within the left hand side set of \eqref{eq:lhset} belongs to $Bad(\Phi)$ again and $\varepsilon>0$
is arbitrarily small, the Hausdorff dimension claim follows with Lemma~\ref{l1} again which still holds for the same reasons.
Similar arguments apply for packing dimension. 

\begin{remark}
	It is possible to obtain the final deduction
 using a non-uniform version Theorem~\ref{dfsu02}. However, this requires to first prove a uniform variant of Lemma~\ref{llama}, which is more technical.
\end{remark}

\subsection{The case $\tau=\infty$}  \label{infty}

Since as observed above in this case our key Lemma~\ref{llama} above fails, we use a different argument. Let us denote
for clarity the set $Exact(\Phi)=Exact^{m,n}(\Phi)$ for given $m,n$,
and similarly define $W^{m,n}(\Phi)$.
First recall that it then follows from~\cite{resm} 
that for $n=1$ the set
$Exact^{m,1}(\Phi)\subseteq \mathbb{R}^m$ has full packing dimension $m$, in particular 
it is non-empty (since $\Phi$ is monotonic here, the latter weaker claim also follows directly from Jarn\'ik~\cite[Satz~6]{jarnik}).

Using essentially the method by 
Moshchevitin~\cite[Theorem~12]{ngm} we show the following.

\begin{lemma}  \label{lehre}
	Let 
	$\Phi$ induce
	$\tau(\Phi)>\theta$. 
	Let $\textbf{a}\in Exact^{m,1}(\Phi)\subseteq \mathbb{R}^m$ 
	be a column vector. Then
	for almost all 
	matrices $U\in \mathbb{R}^{m\times (n-1)}$ with respect to $(n-1)m$-dimensional Lebesgue measure, the composed matrix $A:=(\textbf{a},U)\in \mathbb{R}^{m\times n}$ with first column $\textbf{a}$ followed by $U$ on the right
	also lies in $Exact^{m,n}(\Phi)\subseteq \mathbb{R}^{m\times n}$.
\end{lemma}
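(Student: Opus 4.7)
The strategy is a fibering argument based on Fubini and the first Borel--Cantelli lemma, in the spirit of Moshchevitin~\cite{ngm}. Writing $\mathbf{q} = (q_1, \mathbf{q}') \in \mathbb{Z} \times \mathbb{Z}^{n-1}$ we have $A\mathbf{q} = q_1 \mathbf{a} + U\mathbf{q}'$. It suffices to prove the conclusion with $U$ restricted to an arbitrary bounded cube $B \subset \mathbb{R}^{m(n-1)}$; the a.e.\ statement on all of $\mathbb{R}^{m(n-1)}$ then follows from a countable exhaustion.

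Observe first that $A \in W^{m,n}(\Phi)$ holds for \emph{every} $U$: the infinitely many pairs $(\mathbf{p}_k, q_k)$ witnessing $\mathbf{a} \in W^{m,1}(\Phi) \supseteq Exact^{m,1}(\Phi)$ already yield approximations $\mathbf{q} = (q_k, \mathbf{0})$ of $A$. Hence the task reduces to showing that for a.e.\ $U$, there is no $c > 1$ with $A \in W^{m,n}(\Phi/c)$. By monotonicity of $W(\Phi/c)$ in $c$, it suffices to treat the countable family $c_k = 1 + 1/k$ and then intersect the resulting full-measure sets. Fix such a $c$ and split the pairs $(\mathbf{p}, \mathbf{q})$ according to whether $\mathbf{q}' = \mathbf{0}$. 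The case $\mathbf{q}' = \mathbf{0}$ reduces to $\|q_1 \mathbf{a} - \mathbf{p}\|_\infty \leq \Phi(|q_1|)/c$, which by definition of $Exact^{m,1}(\Phi)$ holds for only finitely many $(\mathbf{p}, q_1)$, irrespective of $U$.

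For the case $\mathbf{q}' \ne \mathbf{0}$, set
\[
E_\mathbf{q} := \{U \in B : \exists \mathbf{p} \in \mathbb{Z}^m,\; \|A\mathbf{q} - \mathbf{p}\|_\infty \le \Phi(\|\mathbf{q}\|)/c\}.
\]
The rows of $U$ act independently, so $E_\mathbf{q}$ is a product set. For row $i$, the condition requires $q_1 a_i + \langle U_{i,*}, \mathbf{q}'\rangle$ to lie within $\Phi(\|\mathbf{q}\|)/c$ of an integer. Choosing $j_0$ with $|q_{j_0}| = \|\mathbf{q}'\|_\infty$ and slicing along $U_{i, j_0 - 1}$ gives $O(|q_{j_0}|)$ admissible integers, each producing a slab of width $O(\Phi(\|\mathbf{q}\|)/|q_{j_0}|)$ in that coordinate, so the row-measure is $\ll_B \Phi(\|\mathbf{q}\|)$; multiplying over the $m$ rows yields $\operatorname{Leb}(E_\mathbf{q}) \ll_B \Phi(\|\mathbf{q}\|)^m$. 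Summing over lattice shells,
\[
\sum_{\mathbf{q}:\,\mathbf{q}' \ne \mathbf{0}} \operatorname{Leb}(E_\mathbf{q}) \ll_B \sum_{Q \ge 1} Q^{n-1} \Phi(Q)^m,
\]
and convergence is secured comfortably by $\tau > \theta > n/m$, which gives $\Phi(Q) \ll Q^{-n/m - \eta}$ eventually for some $\eta > 0$. By Borel--Cantelli, a.e.\ $U \in B$ belongs to only finitely many $E_\mathbf{q}$, and intersecting over $k$ and over an exhaustion $B \nearrow \mathbb{R}^{m(n-1)}$ gives the claim.

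The main technical point is the per-row measure estimate: the $O(|q_{j_0}|)$ count of admissible integers must cancel against the $|q_{j_0}|^{-1}$ in the slab width, leaving a clean $O(\Phi(\|\mathbf{q}\|))$ per row. If this cancellation were not exact, a residual factor $(\|\mathbf{q}\|/\|\mathbf{q}'\|)^m$ would appear and force a considerably stronger decay hypothesis on $\Phi$; the generous margin in $\tau > \theta$ provides slack to absorb any mild losses in that cancellation and to interface with the dimension transfer from $Exact^{m,1}(\Phi)$ supplied by~\cite{bandi, resm}.
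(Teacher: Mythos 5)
Your proof is correct and follows the same Moshchevitin-style scheme as the paper --- fiber over the extra columns $U$, bound the measure of the exceptional set for a fixed threshold $c>1$, and apply the first Borel--Cantelli lemma together with a countable intersection over $c_k\downarrow 1$. The difference is in the volume estimate, and it is a genuine improvement. The paper bounds the measure of $\Omega_N(\mathbf{p},\mathbf{q})$ by $\Phi(N)^m\Vert\mathbf{q}^*\Vert^{-m}$ for each \emph{fixed} pair $(\mathbf{p},\mathbf{q})$ and then multiplies by a crude count $\ll N^m$ of relevant $\mathbf{p}$, arriving at
\[
\mu(\Omega_N)\ll \Phi(N)^m N^{\max\{m+1,n\}}(\log N)^{\delta(n,m+1)},
\]
whose summability is what forces $\tau>\theta$. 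You instead take the union over $\mathbf{p}$ row by row before estimating: for each row $i$ the number of admissible integers $p_i$ is only $O(\Vert\mathbf{q}'\Vert_\infty)$, and this cancels exactly against the slab width $\Phi(\Vert\mathbf{q}\Vert)/\Vert\mathbf{q}'\Vert_\infty$. The residual factor $(\Vert\mathbf{q}\Vert/\Vert\mathbf{q}'\Vert)^m$ you flag as a potential loss is precisely the gap between your $\Vert\mathbf{q}'\Vert^m$ count of admissible $\mathbf{p}$ and the paper's cruder $N^m$; it vanishes, and no slack from $\tau>\theta$ is needed to absorb it, so you could have stated the conclusion more confidently. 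The resulting bound $\mu(E_\mathbf{q})\ll_B\Phi(\Vert\mathbf{q}\Vert)^m$ yields the shell sum $\sum_Q Q^{n-1}\Phi(Q)^m$, which converges under the natural threshold $\tau>n/m$. So your argument in fact proves Lemma~\ref{lehre} under a weaker hypothesis than the stated $\tau>\theta$, consistent with (and quantifying) the paper's remark that the condition ``can be slightly relaxed as the proof of Lemma~\ref{lehre} below shows.'' The preliminary reductions (the $\mathbf{q}'=\mathbf{0}$ case via $\mathbf{a}\in Exact^{m,1}(\Phi)$, the product structure of $E_\mathbf{q}$ across rows, the exhaustion by bounded cubes) are all handled correctly.
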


\begin{remark}
	The claim of the lemma applies, upon modifying $\theta$, more generally to extensions
	of arbitrary matrices (in place of a column vector) via 
	addition of columns, by the same argument. Moreover we may
	replace $Exact^{.,.}(\Phi)$ by $Bad^{.,.}(\Phi)$ throughout.
\end{remark}

 \begin{proof}
Note that 
for any matrix $U$ as in the lemma, for any $q\in \mathbb{Z}, \textbf{p}\in\mathbb{Z}^m$ if we let $\textbf{q}=(q,0,0,\ldots,0)\in\mathbb{Z}^n$ we have
\begin{equation} \label{eq:dannn}
\Vert \textbf{q}\Vert= |q|, \qquad
\Vert A\textbf{q}-\textbf{p}\Vert= \Vert \textbf{a} q - \textbf{p}\Vert. 
\end{equation}
In particular $\textbf{a}\in Exact^{m,1}(\Phi)\subseteq W^{m,1}(\Phi)$ implies $A\in W^{m,n}(\Phi)$.
The substantial part is to show conversely
that $A\notin W^{m,n}(\Phi)/c$ for 
any $c>1$ and Lebesgue almost all $U$. Note that 
$\textbf{a}\notin W^{m,1}(\Phi)/c$ by assumption. For this we use
the method in~\cite{ngm}. 
Let $c>1$ fixed.
For $\textbf{p}\in \mathbb{Z}^m, \textbf{q}\in\mathbb{Z}^n$ 
and writing $\textbf{a}=(a_1,\ldots,a_m)$,
define for $1\le j\le m$ the sets 
\[
J_{N}(p_j,q_1)= (-p_j+q_1 a_{j}-\Phi(N)/c,-p_j+q_1 a_{j})\subseteq \mathbb{R},\qquad N=1,2,\ldots.
\]
Derive
\[
\Omega_N(\textbf{p},\textbf{q}) = \{ U\in \mathbb{R}^{m(n-1)}: 
q_2 u_{j,1} + \cdots + q_n u_{j,n-1} \in J_N(p_j,q_1): 1\le j\le m
 \}
\]
where $U=(u_{i,k})$ for $1\le i\le m, 1\le k\le n-1$, are the entries of $U$. Let further
\[
\Omega_N = \bigcup_{\textbf{p},\textbf{q}:\; \Vert \textbf{q}\Vert=N, \; \textbf{q}^{\ast}\ne 0 } \Omega_N(\textbf{p},\textbf{q})
\]
where $\textbf{q}^{\ast}=(q_2,\ldots,q_n)$, so 
the union is taken over integer vectors where not all $q_2,\ldots,q_n$ vanish and with maximum of $|q_1|,\ldots,|q_n|$ equal to $N$. Note that we do not 
need to take into account vectors with $\textbf{q}^{\ast}=0$
since $\textbf{a}\notin W^{m,1}(\Phi)/c$ and \eqref{eq:dannn}.
Then if converse to our assumption $A\in W^{m,n}(\Phi)/c$
for $A=(\textbf{a},U)$, then the matrix $U$ lies in the
limsup set of the $\Omega_N$, that is it belongs to infinitely many $\Omega_N$. But for this to be true for a positive measure set of matrices $U$, 
by Borel-Cantelli lemma we require
\[
\sum_{N=1}^{\infty} \mu(\Omega_N) = \infty,
\]
where $\mu$ is Lebesgue measure in $\mathbb{R}^{m(n-1)}$.
However, we show that the sum converges to finish the argument.
For any line vector $(u_{j,1},\ldots,u_{j,n-1})$ of an
element in $\Omega_N(\textbf{p}, \textbf{q})$ for given $\textbf{p}, \textbf{q}$,
its distance from the hyperplanes
\[
\{ (x_1,\ldots,x_{n-1}): q_2 x_1 +\cdots+ q_n x_{n-1}= -p_j+q_1a_1 \}\subseteq \mathbb{R}^{n-1}, \qquad 1\le j\le m,
\]
is at most $(\Phi(N)/c)\cdot (q_2^2+\cdots+q_n^2)^{-1/2}\ll \Phi(N) \Vert \textbf{q}^{\ast}\Vert^{-1}$, so
the body in $\mathbb{R}^{m(n-1)}$ where this is true for $1\le j\le m$ has volume
$\ll \Phi(N)^m \Vert\textbf{q}^{\ast}\Vert^{-m}$. Moreover
we can restrict to $\Vert \textbf{p}\Vert \ll N$ as otherwise
$\Omega_N(\textbf{p}, \textbf{q})=\emptyset$. 
 Thus
similar to~\cite{ngm} we calculate
\begin{align*}
\mu(\Omega_N)&\ll  \Phi(N)^m N^m \sum_{q_1\in \mathbb{Z}: |q_1|\le N} \sum_{q_2,\ldots,q_n\in \mathbb{Z}: 1\le \max |q_i|\le N } \frac{1}{\max_{2\le i\le n} |q_i|^m } \\
&\ll  \Phi(N)^m N^{m+1} \sum_{1\le r\le N  } r^{n-m-2} \\
&\ll \Phi(N)^m N^{ \max\{ 1+m, n \} } (\log N)^{\delta(n,m+1) }
\end{align*}
where $\delta(a,b)=1$ if $a=b$ and $\delta(a,b)=0$ otherwise. 
Now since $\tau>\theta$ we know $\Phi(N)<N^{-\theta-\epsilon}$ for
some $\epsilon>0$ and all large $N\ge N_0$. Hence by definition of $\theta$ the tail of the sum is at most $\sum_{N\ge N_0} N^{-1-m\epsilon} \log N$ which indeed converges.
%
\end{proof}

We have noticed above that
$\textbf{a}\in Exact^{m,1}(\Phi)\subseteq \mathbb{R}^m$ exist.
Taking any such $\textbf{a}$, for any $A$ derived as in the lemma
$A\in Exact^{m,n}(\Phi)\subseteq Bad^{m,n}(\Phi)$ as well. The Hausdorff dimension of the set of such matrices $A$ 
is by construction at least $(n-1)m$, for as $W_{\infty}$, so  the Hausdorff dimension claim \eqref{eq:beggin} is proved.
For the packing dimension result \eqref{eq:leggin} we
combine Lemma~\ref{lehre} with the estimate 
\begin{equation}  \label{eq:uppack}
\dim_P(X\times Y) \ge \dim_P(X) + \dim_H(Y)
\end{equation}
for any measurable $X,Y$ due to Tricot~\cite{tricot} with $X=Exact^{m,1}(\Phi)\subseteq \mathbb{R}^m$ and $Y$ the set
of $U$ from Lemma~\ref{lehre}. Then
the claim follows from aforementioned full packing dimension of $X$. 
The proof of the case $\tau=\infty$ is complete as well.

	\section{Proof of Theorem~\ref{mo}}
	
	\subsection{The proof}
	If $\tau=\infty$ both (a), (b) follow from the proof
	in~\S~\ref{infty}, so assume $\tau<\infty$.
	Again we proceed similar to~\S~\ref{infty}. However,
	for (a), instead of using~\cite{resm}, 
	we start directly
	with the set of real vectors $\textbf{a}$ in $\mathbb{R}^{m\times 1}$ in $Exact^{m,1}(\Phi)$ of Hausdorff dimension $(m+1)/(\tau+1)>0$ derived in~\cite{bandi}, which applies since $\tau>\theta\ge (n+1)/m>1/m$ and $\Phi$ is assumed decreasing.
	Then since $\tau>\theta$ 
	again Lemma~\ref{lehre} applies as before and we can add
	$(n-1)m$ by the property 
	\begin{equation}  \label{eq:hhd}
	\dim_H(X\times Y)\ge \dim_H(X) + \dim_H(Y)
	\end{equation}
	for any
	measurable $X,Y$, see Tricot~\cite{tricot} again. 
	   
	    For claim (b),
		for any $\Phi$ inducing large enough $\tau$ (not necessarily monotonic), where $\tau>(5+\sqrt{17})/2$ suffices,
		the full packing dimension result from~\cite{resm} used in~\S~\ref{infty}
		in the vector setting $n=1$ still applies. Hence if $\tau>\theta$
		as well, again by Lemma~\ref{lehre} 
		we get \eqref{eq:222}, \eqref{eq:333}, where for the latter
		we also use \eqref{eq:uppack}. 
		
			\subsection{Verification of Remark~\ref{rema} } \label{RR}
				We finally verify the claim in Remark~\ref{rema}.
			By~\cite{bugmo}, since $\tau>\theta>1$ and $(\ast)$ holds for $\Phi$,
			again for $m=n=1$ the set
			$Exact^{1,1}(\Phi)\subseteq \mathbb{R}$ has full Hausdorff dimension $2/(1+\tau)$. Now take any real number $a\in Exact^{1,1}(\Phi)$ and define the first
			column of $A$ as $\textbf{a}=(a,a,\ldots,a)\in \mathbb{R}^{m\times 1}$. It is easy to see that similarly $\textbf{a}\in Exact^{m,1}(\Phi)$. Indeed,
			if $(q,p)$ is an integer vector realizing 
			a good approximation $p/q$ to $a$, then $(q,p,p,\ldots,p)$
			yields the same approximation quality $|qa-p|=\Vert q\textbf{a}-(p,\ldots,p)\Vert$ by choice of maximum norms,
			hence $\textbf{a}\in W^{m,1}(\Phi)$ is implied
			by $a\in W^{1,1}(\Phi)$. Conversely if for
			$(q,p_1,\ldots,p_m)$ we have $\Vert q\textbf{a}-\textbf{p}\Vert< \Phi(q)/c$ for some $c>1$, then also $|qa-p_1|< \Phi(q)/c$, which shows that if $\textbf{a}\in W^{m,1}(\Phi)/c$ then also
			$a\in W^{1,1}(\Phi)/c$. Since this is not true for any $c>1$ as
			$a\in Exact^{1,1}(\Phi)$, it cannot be true for any $c>1$ for $\textbf{a}$ either. Finally by $\tau>\theta$ using
			our $\textbf{a}$ above we can conclude with Lemma~\ref{lehre} and \eqref{eq:hhd} again.
			 We should notice however that the components of our constructed real matrices
			are not $\mathbb{Q}$-linearly independent.

%
%
%
%

\end{document}